\newtheorem*{remark}{Remark}
\newcommand{\lp}{\left(}
\newcommand{\rp}{\right)}
\newcommand{\lc}{\left\{}
\newcommand{\rc}{\right\}}
\newcommand{\der}{\partial}
\newcommand{\bra}{\langle}
\newcommand{\ket}{\rangle}
\newcommand{\R}{\mathds{R}}      %Numeros reales
\newcommand{\F}{\mathds{F}}
\newcommand{\Flder}{\rightarrow}
\newcommand{\proa}{A^{\ast}G \mbox{$\;$}_{\tau^{\ast}} \kern-3pt\times_\alpha
G \mbox{$\;$}_\beta \kern-3pt\times_{\tau^{\ast}} A^{\ast}G}
\newcommand{\we}{\wedge}
\newcommand{\DQ}{\Delta_Q}
\newcommand{\LQ}{L_{\Delta_Q}}
\newcommand{\ext}{\mathfrak{L}}
\newcommand{\de}{\mathbf{d}}
\newcommand{\Dirac}{\mathbf{d}_D}
\begin{document}
\title{Dirac Structures in Vakonomic Mechanics}
\vspace{-0.2in}

\author{Fernando Jim\'enez\thanks {Email: fjimenez@ma.tum.es}
 \\[2mm] Zentrum Mathematik\\
TU M\"unchen
\\ Boltzmannstr. 3, 85747 Garching, Germany
\vspace{2mm}
 \and 
 \vspace{2mm}
Hiroaki Yoshimura\thanks{Email: yoshimura@waseda.jp}
\\ Department of Applied Mechanics and Aerospace Engineering\\ \&\\ 
Institute of Nonlinear Partial Differential Equations
\\ Waseda University
\\ Okubo, Shinjuku, Tokyo, 169-8555, Japan\\
}

\maketitle
\vspace{-0.3in}
\begin{center}
\abstract{\vspace{0.3cm}
In this paper, we explore dynamics of the nonholonomic system called {\it vakonomic mechanics} in the context of Lagrange-Dirac dynamical systems using a Dirac structure and its associated  Hamilton-Pontryagin variational principle. We first show the link between vakonomic mechanics and nonholonomic mechanics from the viewpoints of Dirac structures as well as Lagrangian submanifolds. Namely, we clarify that Lagrangian submanifold theory cannot represent nonholonomic mechanics properly, but vakonomic mechanics instead. Second, in order to represent vakonomic mechanics, we employ the space $TQ\times V^{\ast}$, where a {\it vakonomic Lagrangian} is defined from a given Lagrangian (possibly degenerate) subject to nonholonomic constraints. Then, we show how {\it implicit vakonomic Euler-Lagrange equations} can be formulated by the {\it Hamilton-Pontryagin variational principle} for the vakonomic Lagrangian on the extended Pontryagin bundle $(TQ\oplus T^{\ast}Q)\times V^{\ast}$.  Associated with this variational principle, we establish a  Dirac structure on $(TQ\oplus T^{\ast}Q)\times V^{\ast}$ to define an {\it intrinsic vakonomic Lagrange-Dirac system}. Furthermore, we establish another construction for the vakonomic Lagrange-Dirac system using a Dirac structure on $T^{\ast}Q\times V^{\ast}$, where we introduce a {\it vakonomic Dirac differential}. Lastly, we illustrate our theory of vakonomic Lagrange-Dirac systems by some examples such as the vakonomic skate and the vertical rolling coin.
\vspace{2mm}
\noindent
}
\end{center}

{\it Keywords and phrases:} Dirac structures, vakonomic mechanics, nonholonomic mechanics, variational principles, implicit Lagrangian systems. 
\\

{\it 2000 Mathematics Subject Classification:} 70H45, 70F25, 70Hxx, 70H30.
\tableofcontents

\section{Introduction}
\paragraph{Some Backgrounds.} 
In conjunction with optimal control design, much effort has been concentrated upon exploring geometric structures and variational principles of constrained systems (see, for instance, \cite{Lan1949, Arnold, GiaHil, Jurdjevic, MarRat1999, Bloch2003}).  
The motion of such constrained systems may be subject to a nontrivial distribution on a configuration manifold. For the case in which the given distribution is integrable in the sense of Frobenius's theorem, the constraint is called holonomic, otherwise nonholonomic. It is well known that equations of motion for Lagrangian systems with holonomic constraints can be formulated by Hamilton's variational principle by incorporating holonomic constraints into an original Lagrangian through Lagrange multipliers. On the other hand, Hamilton's variational principle does not yield correct equations of motion for mechanical systems with nonholonomic constraints, but induces different mechanics instead. The correct equations of motion for nonholonomic mechanics can be developed from the Lagrange-d'Alembert principle. In other words, there are two different mechanics associated with systems with nonholonomic constraints. The first one is based on the Lagrange-d'Alembert principle and the corresponding equations of motion are called {\it nonholonomic mechanics}. The second one is called {\it vakonomic mechanics} ({\it mechanics of \textbf{v}ariational \textbf{a}xiomatic \textbf{k}ind}), which is purely variational and was developed by \cite{Kozlov}; the name of vakonomic mechanics was coined by \cite{Arnold}. Needless to say, both approaches are essentially different from the other: interesting comparisons between both of them can be found in \cite{Lewis,CLMM2003}.

Nonholonomic mechanics has been studied from the viewpoints of Hamiltonian, Lagrangian as well as Poisson dynamics (see \cite{KM1997}). Indeed, nonholonomic mechanics has
many applications to engineering, robotics, control of satellites, etc., since it seems to be appropriate to
model the dynamical behavior of phenomena such as rolling rigid-body, etc. (see \cite{NeFu1972}). On the other hand, vakonomic mechanics appears in some problems of optimal control theory (related to sub-Riemannian geometry) (\cite{BlochCrouch,Brockett}), economic growth theory (\cite{LeMa1998}), motion of microorganisms
at low Reynolds number (\cite{Jair}), etc. A geometric unified approach was developed in \cite{LeMaMa2000}.
\medskip

In mechanics, one usually starts with a configuration manifold $Q$; Lagrangian mechanics deals with the tangent bundle $TQ $, while Hamiltonian mechanics with the cotangent bundle $T^{\ast}Q$. It is known that nonholonomic and vakonomic mechanics can be described on extended spaces because of the presence of Lagrange multipliers.
An interesting geometric approach to Lagrangian vakonomic mechanics on $TQ\times\R^m$ may be found in \cite{BendeDie2005}, while an approach on $T(Q\times\R^m)$ may be found in \cite{MarCordeLe2000}. In particular, since an extended Lagrangian on $TQ\times\R^m$ or $T(Q\times\R^m)$ is clearly {\it degenerate}, we have to explore its dynamics by using Dirac's theory of constraints (see \cite{Dirac1950}). Another interesting approach may be found in \cite{CLMM2003}, where the authors depart from $TQ\oplus T^{\ast}Q$, and its submanifold $W_0=\DQ\times_Q T^{\ast}Q$, where $\DQ\subset TQ$, in order to develop an intrinsic description of vakonomic mechanics. 
\medskip

As shown in \cite{YoMa2006a}, degenerate Lagrangian systems with nonholonomic constraints may be described, in general, by a set of implicit differential-algebraic equations, where a key point in the formulation of such implicit systems is to  make use of the {\it Pontryagin  bundle} $TQ \oplus T^{\ast}Q$, namely the fiber product (or Whitney) bundle $TQ \oplus T^{\ast}Q$. To the best of our knowledge, the Pontryagin  bundle was first investigated in \cite{SkRu1983} to aid in the study of the degenerate Lagrangian systems, which is the case that we also treat in the present paper. The iterated tangent and cotangent spaces $TT^{\ast}Q$, $T^{\ast}TQ$,  and $T^{\ast}T^{\ast}Q$ and the relationships among these spaces were investigated by \cite{Tu1977} in conjunction with the generalized Legendre transform, where a symplectic diffeomorphism $\kappa_Q: TT^{\ast}Q \to T^{\ast}TQ$ plays an essential role in understanding Lagrangian systems in the context of Lagrangian submanifolds.  The relation between these iterated spaces and the Pontryagin bundle was also discussed in \cite{CHHM1998}. Furthermore, \cite{Cour1990b} investigated the iterated spaces $TT^{\ast}Q$, $T^{\ast}TQ$,  and $T^{\ast}T^{\ast}Q$ in conjunction with the tangent Dirac structures. 
\medskip

The notion of Dirac structures was developed by \cite{CoWe1988,Dorfman1987} as a unified structure of pre-symplectic and Poisson structures, where the original aims of these authors were to formulate the dynamics of constrained systems, including constraints induced from degenerate Lagrangians, as in \cite{Dirac1950, Dirac1964}, where we recall that Dirac was concerned with degenerate La\-gran\-gians, so that  the image $P \subset T^{\ast}Q$ of the Legendre transformation, called the set of {\it primary constraints} in the language of Dirac, need not be the whole space. 
 The canonical Dirac structures can be given by the graph of the bundle map associated with the canonical symplectic structure or the graph of the bundle map  associated with the canonical Poisson structure on the cotangent bundle, and hence it naturally provides a geometric setting for Hamiltonian mechanics. It was already shown by \cite{Cour1990a} that Hamiltonian systems can be formulated in the context of Dirac structures, however, its application to electric circuits and mechanical systems with nonholonomic constraints was studied in detail by \cite{VaMa1995}, where they called the associated Hamiltonian systems with Dirac structures {\it implicit Hamiltonian systems}. On the other hand, \cite{YoMa2006a} explored on the Lagrangian side to clarify the link between an induced Dirac structure on $T^{\ast}Q$ and a degenerate Lagrangian system with nonholonomic constraints and they developed a notion of {\it implicit Lagrangian systems} as a Lagrangian analogue of implicit Hamiltonian systems. Moreover, the associated variational structure with implicit Lagrangian systems was investigated in \cite{YoMa2006b}, where it was shown that the Hamilton-Pontryagin principle  provides the standard implicit Lagrangian system. Another recent development that may be relevant with the Dirac theory of constraints was explored by \cite{CeEtFe2011} by emphasizing the duality between the Poisson-algebraic and the geometric points of view, related to Dirac's and of Gotay and Nester's work.

\paragraph{Goals of the Paper.}
The main purpose of this paper is to explore vakonomic mechanics, in the Lagrangian setting, both in the context of the Dirac structure and its associated variational principle called the Hamilton-Pontryagin principle. Another important point that we will clarify is the link between Dirac structures and Lagrangian submanifolds for the case of vakonomic mechanics. The organization of the paper is given as follows:

In \S2, we will briefly introduce the geometric setting of the iterated tangent and cotangent bundles as well as the Pontryagin bundle. In \S3, we will shortly review the Lagrangian submanifold theory for mechanics and will show that nonholonomic mechanics cannot be formulated on Lagrangian submanifolds, since the pullback of a symplectic two-form to the submanifold does not vanish. In \S4 we will review Dirac structures in nonholonomic mechanics, by using the induced Dirac structure on the cotangent bundle and we will show how a degenerate Lagrangian system can be developed in the context of Dirac structures, together with the associated Lagrange-d'Alembert principle. In \S5, we will consider the extended tangent bundle $TQ\times V^{\ast}$, where an extended Lagrangian $\mathfrak{L}$, called {\it vakonomic Lagrangian}, is defined in association with a given Lagrangian $L$ on $TQ$ and with nonholonomic constraints. Then we will show that the vakonomic dynamics on $(TQ\oplus T^{\ast}Q)\times V^{\ast}$ can be obtained by the Hamilton-Pontryagin principle for $\mathfrak{L}$, which yields the {\it implicit vakonomic Euler-Lagrange equations}. In parallel with this variational setting, taking advantage of the presymplectic structures constructed on $(TQ\oplus T^{\ast}Q)\times V^{\ast}$, we will illustrate how the vakonomic analogue of the Lagrange-Dirac systems  can be intrinsically developed by making use of the Dirac structure on $(TQ\oplus T^{\ast}Q)\times V^{\ast}$. We shall also show another construction of the vakonomic Lagrange-Dirac system by employing a Dirac structure on $T^{\ast}Q\times V^{\ast}$. To do this, we make use of the Dirac differential of $\mathfrak{L}$, where we introduce two maps $\widehat{\Omega}^{\flat}: T(T^{\ast}Q \times V^{\ast}) \to T^{\ast}(T^{\ast}Q \times V^{\ast})$ and $\tilde{\gamma}_Q: T^{\ast}(T^{\ast}Q \times V^{\ast}) \to T^{\ast}(TQ \times V^{\ast})$ among the iterated bundles $T^{\ast}(T^{\ast}Q\times V^{\ast})$,  $T(T^{\ast}Q\times V^{\ast})$ and $T^{\ast}(TQ\times V^{\ast})$. It will be proved that the vakonomic Lagrange-Dirac system leads to the implicit vakonomic Euler-Lagrange equations. The section is closed with the main result of this paper, Theorem \ref{TEO}, where we summarize vakonomic mechanics can be formulated by Dirac structures as well as the Hamilton-Pontryagin variational principle. In \S6, we will demonstrate our theory by some examples such as the vakonomic particle, the vakonomic skate and the vertical rolling disk on a plane. In \S7, we will give some concluding remarks and future works.
\smallskip

\paragraph{Hamilton's Principle for Holonomic Lagrangian Systems.}
Before going into details, let us briefly recall the variational principle for constrained Lagrangian systems.
First consider the case in which holonomic constraints are given. 
Let $Q$ be a smooth $n-$dimensional manifold. Let $L:TQ\Flder\R$ be a Lagrangian and let $\Delta_{Q}$ be a constraint distribution on $Q$ given for each $q \in Q$ as  
\begin{equation*}\label{distribution}
\Delta_Q(q)=\lc v_q\in TQ\,|\,\bra \mu^{\alpha}(q),v_q\ket=0,\,\alpha=1,...,m <n\rc,
\end{equation*}
where $\mu^{\alpha}$ are $m$ independent one-forms that form the basis for the annihilator $\Delta_Q^{\circ}\subset T^{\ast}Q$. In this paper, we assume that every distribution is {\it regular}, namely, it has constant rank at each point and is smooth unless otherwise stated.
It follows from Frobenius's theorem that  $\Delta_{Q}$ is {\it integrable} or {\it holonomic} if for any vector fields $X, Y$ on $Q$ with values in $\Delta_{Q}$, $[X,Y]$ is a vector field that takes values in $\Delta_{Q}$. Then, the submanifold $\Delta_{Q}$ may be described by a foliation $N \subset Q$ such that, for each $q \in Q$,
\[
\Delta_{Q}(q)=T_{q}N,
\]
where there exist smooth {\it local} functions $\varphi^{\alpha}: Q \to \mathbb{R}$ as
\[
\varphi^{\alpha}(q)=\mathrm{const},\quad \alpha=1,...,m;
\]
and $\mu^{\alpha}=\de\varphi^{\alpha}$ at each point in $q \in Q$. 
\medskip

Let us define an extended Lagrangian $\mathcal{L}: TQ \times \mathbb{R}^{m} \to \mathbb{R}$ by
\[
\mathcal{L}(q,\dot{q},\lambda):=L(q,\dot{q})+\sum_{\alpha=1}^{m}\lambda_{\alpha} \varphi^{\alpha}(q),
\]
where $(q,\dot q)$ are the local coordinates of $TQ$ and $\lambda_{\alpha}$ are the Lagrange multipliers, which may be regarded as new variables. It follows from Hamilton's principle that the stationarity condition for the action integral
\[
\int_{t_1}^{t_2}\mathcal{L}(q(t),\dot{q}(t),\lambda(t)) dt
\] 
induces equations of motion for the holonomic Lagrangian mechanical systems (see \cite{GiaHil, Yo2008}):
\begin{align*}
&\frac{d}{dt}\frac{\der L}{\der\dot q}=\frac{\der L}{\der q}+\sum_{\alpha=1}^{m} \lambda_{\alpha}\frac{\der\varphi^{\alpha}(q)}{\der q},\\[3mm]
&\varphi^{\alpha}(q)=0.
\end{align*}
Regarding the repeated indices such as $\alpha$ in the above equations, we will employ Einstein's summation convention in this paper unless otherwise noted.

\paragraph{Conventional Setting for Vakonomic Systems.}\label{usual}
Let $\phi^{\alpha}:TQ\Flder \R$ be a set of $m$ smooth functions, where $\alpha=1,...,m$, by which the constraints $\phi^{\alpha}=\mathrm{const}$  define a $(2n-m)-$dimensional submanifold $\Delta_Q\subset TQ$. %(later on, we will denote the restriction of the Lagrangian function $L$ to $\DQ$ as $\LQ:\DQ\Flder\R$). 

As in holonomic Lagrangian systems, let us define an extended Lagrangian $\ext:TQ\times\R^m\Flder\R$ by
\begin{equation}\label{Ext}
\ext(q,\dot{q},\lambda):=L(q,\dot{q})+\lambda_{\alpha}\phi^{\alpha}(q,\dot{q}).
\end{equation}

Now, it is known that Hamilton's principle for the action functional in the above does {\it not} provide correct equations of motion for the nonholonomic Lagrangian mechanical system, but some other dynamics called {\it vakonomic mechanics} (see \cite{Arnold}). The correct equations of motion for nonholonomic Lagrangian mechanics can be  given by {\it Lagrange-d'Alembert principle}.
\medskip

Again, let $(q,\dot q,\lambda)$ be local coordinates for $TQ\times\R^m$ and consider the action functional given by
\[
\int_{t_{1}}^{t_{2}}\ext(q(t),\dot q(t),\lambda(t))\,dt.
\]
Keeping fixed the endpoints of the curve $q(t), \; t\in I=[t_{1},t_{2}] $, i.e., $q(t_{1})$ and $q(t_{2})$ fixed, whereas $\lambda(t_{1})$ and $\lambda(t_{2})$ of $\lambda(t)$ are allowed to be free, the stationary condition of the above action functional provides the {\bfi vakonomic Euler-Lagrange equations}
\begin{equation*}
\begin{split}
\frac{d}{dt}\lp\frac{\der\ext}{\der\dot q}\rp-\frac{\der\ext}{\der q}&=0,\\
\frac{\der\ext}{\der\lambda_{\alpha}}&=0,
\end{split}
\end{equation*}
which induce the usual equations of motion of vakonomic dynamics:
\begin{equation}\label{VDynamics}
\begin{split}
\frac{d}{dt}\lp\frac{\der L}{\der\dot q}+\lambda_{\alpha}\frac{\der\phi^{\alpha}}{\der\dot q}\rp&=\frac{\der L}{\der q}+\lambda_{\alpha}\frac{\der\phi^{\alpha}}{\der q},\\[3mm]
\phi^{\alpha}(q,\dot q)&=0.
\end{split}
\end{equation}
\medskip

Now, let us illustrate the vakonomic setting with several applications.

\paragraph{Optimal Control Theory.}\label{octh}

An optimal control problem is described by the following data (see \cite{deLedeMS-MA2007, JimKobdeM2012}): a configuration space $B$ giving the state variables of the system, a fiber bundle $\pi:N\Flder B$ where fibers describe the control variables, a vector field $Y:N\Flder TB$ along the projection $\pi$, and a {\it cost function} $C:N\Flder\R$. We consider the solutions of the optimal control problem the curves $\gamma:I\subset\R\Flder N$ such that $\pi\circ\gamma $ has fixed endpoints (that is, if $b(t)$ is a curve, then $b(t_{1})$ and $b(t_{2})$ have fixed values), extremize the action functional
\[
\int_{t_{1}}^{t_{2}}C(\gamma(t))\,dt,
\]
and satisfy the differential equation
\[
\frac{d}{dt}(\pi\circ\gamma)=Y\circ\gamma,
\]
which rules the evolution of the state variables.

It is easy to show that this is indeed a vakonomic problem on the manifold $N$. The constraint submanifold $M\subset TN$, given by the above-mentioned differential equation, is defined by
\[
M=\lc v_n\in TN\,|\,T\pi(v_n)=Y(n)\,\,,\,n\in N\rc.
\]
The previous  description of an optimal control problem determines the following commutative diagram:
\[
\xymatrix{
N\ar[rr]^{Y}\ar[dr]_{\pi} & &TB\ar[dl]^{\tau_B}\\
  & B&
}
\]
In the above, $\tau_B:TB\Flder B$ is the canonical projection. Notice that $M$ plays the role of $\DQ$, $TN$ the role of $TQ$ and $C$ the role of the Lagrangian function $L$ in the setting of vakonomic mechanics.

\medskip

In conjunction with optimal control, we remark that Pontryagin's maximum principle is the machinery that gives necessary conditions for solutions of optimal control problems (see \cite{PBGM1962, Sussmann1998}), which is relevant with variational principles for vakonomic dynamics in this paper.

\paragraph{Sub-Riemannian Geometry.}
A sub-Riemannian structure on a manifold is a generalization of a Riemannian structure, where
the metric is only defined on a vector subbundle of the tangent bundle. In such a
case, the notion of length is only assigned to a subclass of curves, namely, curves with tangent
vectors belonging to the vector subbundle at each point (see \cite{Lang,Mont}). More precisely, consider an
$n$-dimensional manifold $Q$ equipped with a smooth distribution $\Delta_{Q}(q)$ with constant rank $n-m$ at each point $q \in Q$. A sub-Riemannian
metric on $\DQ$ consists of giving a positive definite quadratic form $g_q$ on $\DQ$ smoothly varying in $Q$. We will say that a piecewise smooth curve $\gamma:I=[t_{1},t_{2}]\subset\R\Flder Q$ is {\it admissible} if $\dot\gamma(t)\in\DQ$ for all $t \in I$. Using the metric $g$, it is possible to define the length $l(\gamma)$
\[
l(\gamma)=\int_{t_{1}}^{t_{2}}\sqrt{g(\dot\gamma(t),\dot\gamma(t))}\,dt,
\]
for admissible curves $\gamma: I\Flder Q$. From this definition, we can define the distance between two points $q_1,q_2\in Q$ as
\[
d(q_1,q_2)=\mbox{inf}_{\gamma}\lp l(\gamma)\rp,
\]  
if there exists admissible curves connecting $q_1$ and $q_2$. A curve which realizes the distance between two points is called a {\it minimizing sub-Riemannian geodesic}. Let $\mu^1,...,\mu^m$ be a basis of one-forms for the annihilator $\DQ^{\circ}$. Then, an admissible path must verify the nonholonomic constraints
\begin{equation}\label{subr}
\bra\mu^{\alpha}(\gamma),\dot\gamma\ket=0,\,\,\,\alpha=1,...,m.
\end{equation}
Therefore, it is clear that the problem to minimize sub-Riemannian geodesics is
exactly the same as the vakonomic problem determined by the Lagrangian $L=\frac{1}{2}\,g$ and with the nonholonomic constraints \eqref{subr}.

\section{Iterated Tangent and Cotangent Bundles} \label{iterated_section}

In this section we recall some basic geometry of the spaces
$TT^{\ast}Q$,  $T^{\ast}T^{\ast}Q$ and $T^{\ast}TQ$, as well as the
Pontryagin bundle $TQ \oplus T^{\ast}Q$. These spaces are needed for the construction of 
Lagrangian mechanics on the tangent bundle $TQ$ and  Hamiltonian 
mechanics on the cotangent bundle $T^{\ast}Q$ in the context of Lagrangian submanifold theory. In particular, there are two diffeomorphisms among $T^{\ast}TQ$, $TT^{\ast}Q$ and 
$T^{\ast}T^{\ast}Q$ that were originally developed by \cite{Tu1977} for the generalized Legendre transform.

\paragraph{Diffeomorphism between $TT^{\ast}Q$ and $T^{\ast}TQ$.}
Now, we are going to define a natural
diffeomorphism
\[
\kappa_Q: TT^{\ast}Q \to T^{\ast}TQ.
\]
In a local trivialization, $Q$
is represented by an open set $U$ in a linear space $V$, so that
$TT^{\ast}Q$ is represented by $(U \times V^{\ast}) \times (V \times
V^{\ast})$, while $T^{\ast}TQ$ is locally given by $(U \times V)
\times (V^{\ast} \times V^{\ast})$. In this local representation, the
map
$\kappa_Q$ will be given by
\[
(q, p, \delta q, \delta p) \mapsto (q, \delta q, \delta p, p),
\]
where $(q,p)$ are local coordinates
of $T^{\ast}Q$ and $(q,p,\delta{q},\delta{p})$ are the corresponding
coordinates of $TT^{\ast}Q$, while $(q, \delta q, \delta p, p)$ are
the local coordinates of $T^{\ast}TQ$ induced by $\kappa_Q$.

Consider the following two maps:
\[
T{\pi_Q}: TT^{\ast}Q \to TQ, \quad
  \pi_{TQ}: T^{\ast}TQ \to TQ,
\]
which are the obvious maps and recall that $\pi_Q: T^{\ast}Q \to Q$ 
is the cotangent projection. The commutative condition used to define $\kappa_Q$ is the 
 following diagram:
\[
\xymatrix{
TT^{\ast}Q\ar[rr]^{\kappa_{Q}}\ar[dr]_{T\pi_{Q}} & & T^{\ast}TQ\ar[dl]^{\pi_{TQ}}\\
  & TQ&
}
\]
Namely,
\begin{equation*} \label{commute1}
\pi_{TQ} \circ \kappa_Q = T \pi_Q.
\end{equation*}

\paragraph{Local Representation.}  In a natural local trivialization, 
these maps are readily checked to be given by
\[
\begin{split}
T\pi_Q(q,p,\delta q,\delta p)&=(q,\delta q),\\
\pi_{TQ}(q,\delta q, \delta p, p)&=(q, \delta q),\\
\tau_{T^{\ast}Q}(q,p,\delta q,\delta p)&=(q,p).
\end{split}
\]

\paragraph{Diffeomorphism between $T^{\ast}T^{\ast}Q$ and
$TT^{\ast}Q$.} Let $\Omega_{T^{\ast}Q}$ be the canonical symplectic form on the
cotangent bundle $T^{\ast}Q$. There exists a natural
diffeomorphism given by 
\[
\Omega^{\flat}_{T^{\ast}Q}:TT^{\ast}Q \to T^{\ast}T^{\ast}Q,
\]
which is the unique map
that also intertwines two sets of maps:
\[
  \tau_{T^{\ast}Q}: TT^{\ast}Q \to T^{\ast}Q, \quad
  \pi_{T^{\ast}Q}: T^{\ast}T^{\ast}Q \to T^{\ast}Q.
\]
Let $\tau_Q:TQ \to Q$ be the tangent projection and the commutative condition for $\Omega_{T^*Q}^{\flat}$ is given by the following diagram:
\[
\xymatrix{
TT^{\ast}Q\ar[rr]^{\Omega^{\flat}_{T^{\ast}Q}}\ar[dr]_{\tau_{T^{\ast}Q}} & & T^{\ast}T^{\ast}Q\ar[dl]^{\pi_{T^{\ast}Q}}\\
  & T^{\ast}Q&
}
\]
Namely,
\begin{equation*}
\pi_{T^{\ast}Q} \circ \Omega^{\flat}_{T^{\ast}Q}=\tau_{T^{\ast}Q}.
\end{equation*}

\paragraph{Local Representations of Maps.}
As before, in a local trivialization, $T^{\ast}T^{\ast}Q$ is represented by $(U \times V^{\ast}) \times 
(V^{\ast} \times V)$, while $TT^{\ast}Q$ is represented by $(U \times 
V^{\ast}) \times (V \times V^{\ast})$. The map $\Omega^{\flat}_{T^{\ast}Q}$ is 
locally represented by
\[
(q,p,\delta{q},\delta{p}) \mapsto (q,p,-\delta{p},\delta{q}).
\]
Thus, the commutative diagram is verified in a local trivialization since one has
\[
\begin{split}
\pi_{T^{\ast}Q}(q,p,-\delta{p},\delta{q})&=(q,p),\\
\tau_{T^{\ast}Q}(q,p,\delta{q},\delta{p})&=(q,p).
\end{split}
\]
\paragraph{The Diffeomorphism between $T^{\ast}TQ$ and $T^{\ast}T^{\ast}Q$.} With the elements previously defined, we can define a diffeomorphism between $T^{\ast}TQ$ and $T^{\ast}T^{\ast}Q$, namely:
\[
\gamma_Q=\Omega^{\flat}_{T^{\ast}Q}\circ(\kappa_Q)^{-1}:T^{\ast}TQ\Flder T^{\ast}T^{\ast}Q.
\]
Using the local representation of $\Omega^{\flat}_{T^*Q}$ and of $\kappa_Q$, the map $\gamma_{Q}$ is locally given by
\[
\gamma_Q:(q,\delta q,\delta p,p)\mapsto (q,p,-\delta p,\delta q).
\]

\paragraph{The Symplectic Form on $TT^{\ast}Q$.}\label{dOmega}
The manifold $TT^{\ast}Q$ is a symplectic manifold with a special
symplectic form $\Omega_{TT^{\ast}Q}$, which can be defined by two
distinct ways as the exterior derivative of two intrinsic but
different one-forms. 

Now, there exist two one-forms $\lambda$ and $\chi$ given by
\[
\lambda=\left( \kappa_Q \right)^{\ast}\, \Theta_{T^{\ast}TQ},\quad
\chi=\left( \Omega^{\flat}_{T^{\ast}Q} \right)^{\ast} \,
\Theta_{T^{\ast}T^{\ast}Q},
\]
where $\Theta_{T^{\ast}TQ}$ denotes the canonical one-form  on
$T^{\ast}TQ$ and $\Theta_{T^{\ast}T^{\ast}Q}$ the
canonical one-form on $T^{\ast}T^{\ast}Q$.

Using the local coordinates $(q,p)$ and
$(q,p,\delta{q},\delta{p})$ of $T^{\ast}Q$ and
$TT^{\ast}Q$, these two one-forms are denoted by
\[
\lambda=\delta{p}\,dq+p\,d\delta{q}, \quad
\chi=-\delta{p}\,dq+\delta{q}\,dp.
\]
Thus, the symplectic form $\Omega_{TT^{\ast}Q}$ on $TT^{\ast}Q$
associated with $\lambda$ and $\chi$ can be defined by
\begin{equation*} 
\begin{split}
\Omega_{TT^{\ast}Q}&=\mathbf{d}\chi =-\mathbf{d}\lambda \\
&=dq \wedge d{\delta{p}}+d\delta{q} \wedge dp.
\end{split}
\end{equation*}
%
%\paragraph{Remark.} Recall the definition of the canonical one-form and two-form on 
%the cotangent bundle $T^{\ast}M$ of a manifold $M$. Namely, the
%canonical one-form 
%$\Theta_{T^{\ast}M}$ on $T ^{\ast}M $ is defined by
%\[
%\bra\Theta_{T^{\ast}M} (\alpha),\,w\ket=\left< \alpha, T\pi_{M}(w)\right>,
%\]
%where $\alpha \in T^{\ast}M$, $w \in T_{\alpha}T^{\ast}M$, $\pi_{M}:T^{\ast}M \to M$ 
%is the cotangent projection and 
%$T\pi_{M}:TT^{\ast}M \to TM$ is the tangent map of $\pi_{M}$. The canonical symplectic two-form is denoted as $\Omega_{T^{\ast}M}=-\mathbf{d}\Theta_{T^{\ast}M}$. 
%For the case $M=TQ$, we have 
%the canonical one-form $\Theta_{T^{\ast}TQ}$, for the case $M=T^{\ast}Q$, we have 
%the canonical one-form $\Theta_{T^{\ast}T^{\ast}Q}$. 
%

% ---------Pontryagin Manifold $TQ \oplus T^{\ast}Q$-------
%
\paragraph{Pontryagin Bundle $TQ \oplus T^{\ast}Q$.}
Consider the bundle $TQ \oplus T^{\ast}Q$ over $Q$,
that is, the Whitney sum of the tangent bundle and the cotangent
bundle over $Q$, whose fiber at $q \in Q $ is the product $T_qQ \times
T^{\ast}_qQ$. The bundle $TQ \oplus T^{\ast}Q$ is called the
{\it Pontryagin bundle}, see \cite{YoMa2006a}. Again, using a model space $V$ for $Q$ and a chart domain, which is an
open set $U \subset V$, then $TQ \times T^{\ast}Q$ is locally denoted by $U \times V \times U \times V^{\ast}$ and $TQ \oplus T^{\ast}Q$ by $U \times V \times V^{\ast}$. In this local
trivialization, the local coordinates of $TQ \oplus T^{\ast}Q$ are written
\[
(q,v,p) \in U \times V \times V^{\ast}.
\]
Then, the following three projections are naturally defined:
\[
\begin{split}
\mathrm{pr}_{TQ}&:TQ \oplus T^{\ast}Q \to TQ; \quad \,\,\,(q,v,p) \mapsto (q, v ),\\
\mathrm{pr}_{T^{\ast}Q}&:TQ \oplus T^{\ast}Q \to T^{\ast}Q; \quad (q,v,p) \mapsto (q, p ),\\
\mathrm{pr}_{Q}&:TQ \oplus T^{\ast}Q \to Q; \quad \,\,\,\,\,\,\,(q,v,p) \mapsto q.
\end{split}
\]
%
%
%We now define an embedding $\rho_{T^{\ast}TQ}:T^{\ast}TQ \to TQ \oplus T^{\ast}Q$ which will have the property that $\mathrm{pr}_{TQ} \circ \rho_{T^{\ast}TQ}=\pi_{TQ}$. This map is defined intrinsically to be the direct sum of the two maps $\pi_{TQ }: T ^{\ast} TQ \rightarrow TQ $ and $\tau _{T^{\ast}Q} \circ \kappa_{Q} ^{-1}: T ^{\ast} TQ \rightarrow T^{\ast}Q$ and is easily checked to be  locally represented by
%\[
%\rho_{T^{\ast}TQ}:T^{\ast}TQ \to TQ \oplus T^{\ast}Q; \quad (q,\delta{q},\delta{p},p) \mapsto (q,\delta{q},p).
%\]
%Notice that this map is  a ``projection'' in the sense that it simply eliminates $\delta p$. 
%We likewise define the maps
%\begin{align*}
%\rho_{TT^{\ast}Q} = \rho_{T^{\ast}TQ} \circ \kappa _{Q} & :TT^{\ast}Q \to TQ \oplus T^{\ast}Q; \quad \,\,\,(q,p, \delta{q},\delta{p}) \mapsto (q,\delta{q},p), \\
%\rho_{T^{\ast}T^{\ast}Q} =  \rho_{TT^{\ast}Q} \circ (\Omega^{\flat}_{T^{\ast}Q}) ^{-1} & :T^{\ast}T^{\ast}Q \to TQ \oplus T^{\ast}Q; \quad (q,p, - \delta{p},\delta q ) \mapsto (q,\delta{q},p) ,
%\end{align*}
%each of which also eliminates $\delta p $.
%\medskip

All the previous developments may be summarized into the following diagram:

\begin{figure}[h]
\begin{center}
\vspace{0.5cm}
\includegraphics[scale=.6]{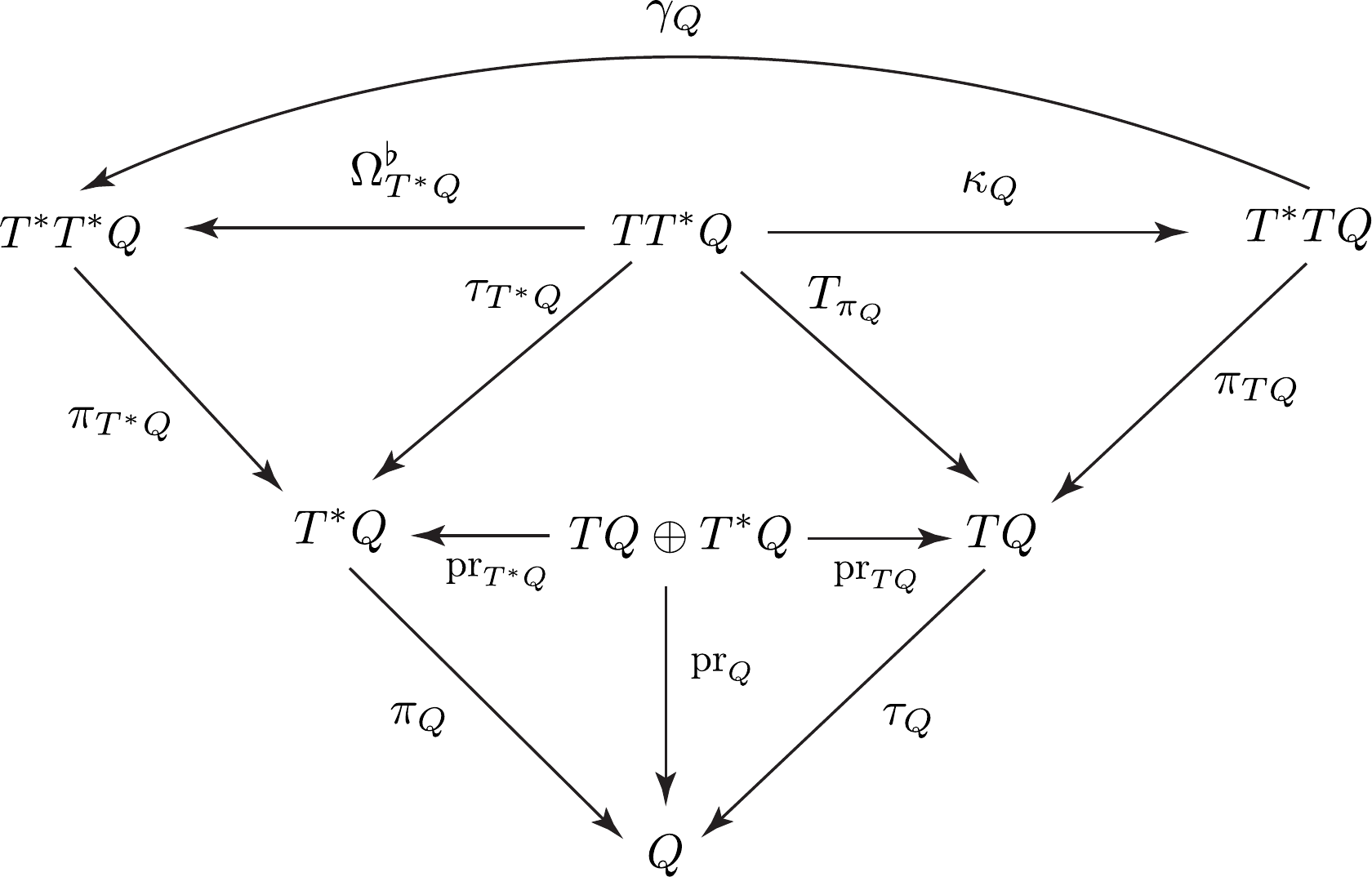}
\caption{Iterated Tangent and Cotangent Bundle Structures}
\label{IteratedTanCoBundle}
\end{center}
\end{figure}
%

 %------------------------------------------------

\section{Lagrangian Submanifolds in Mechanics}

As introduced in $\S$\ref{iterated_section}, the spaces $TT^{\ast}Q$, $T^{\ast}TQ$, $T^{\ast}T^{\ast}Q$ are interrelated with each other by two symplectomorphisms $\kappa_{Q}:TT^{\ast}Q \to T^{\ast}TQ$ and $\Omega^{\flat}_{T^{\ast}Q}:TT^{\ast}Q \to T^{\ast}T^{\ast}Q$, which play 
essential roles in the construction of the generalized Legendre transformation originally 
developed by \cite{Tu1977}. In this section, we shall see the theory of Lagrangian submanifolds using the geometry of these spaces. For the details, see, for instance, \cite{Wein1971, AbMa1978, Wein1979,LibMarle1987,TuUr1999,YoMa2006b}. 

\paragraph{Lagrangian Submanifolds.} \label{LagSubmani} 
Given a finite-dimensional symplectic manifold $(P,\Omega_P)$, and a submanifold $N$ with canonical inclusion $i_N:N\hookrightarrow P$, then $N$ is a {\it Lagrangian submanifold} if and only if $i_N^{\ast}\Omega_P=0$ and dim$N=\frac{1}{2}$dim$P$. If, locally, $\Omega_{P}=\mathbf{d}\Theta_{P}$, then $i_N^{\ast}\Omega_P=\mathbf{d}i_N^{\ast}\Theta_{P}=0$. So, it follows that there exists a function $f: N \to \mathbb{R}$ (defined locally) such that 
$i_N^{\ast}\Theta_{P}=\mathbf{d}f$. We call $f$ a {\it generating function} of the Lagrangian submanifold $N$. By the Poincar\'e lemma, locally this is always the case.

It is well known that the cotangent bundle $T^{\ast}P$ of a given finite-dimensional smooth manifold $P$, equipped with the symplectic two-form $\Omega_{T^*P}$, is a symplectic manifold $(T^{\ast}P,\Omega_{T^*P})$. Let $\alpha$ be a {\it closed} one-form on $P$. Then, the image of $\alpha$, namely $\Sigma=\mbox{Im}\lp\alpha(P)\rp\subset T^{\ast}P$,  is a Lagrangian submanifold of $(T^{\ast}P,\Omega_{T^*P})$, since $\alpha^{\ast}\Omega_{T^*P}=-\mathbf{d}\alpha=0$. Thus, we obtain a submanifold diffeomorphic to $P$ and transverse to the fibers of $T^{\ast}P$. As to the details, see \cite{AbMa1978}.

A useful extension of the previous construction is given by \cite{Tu1977} in the following theorem.

\begin{theorem}[{\bf Tulczyjew}]\label{Tulczy}
Let $M$ be a smooth manifold, $\tau_M:TM\Flder M$ and $\pi_M:T^{\ast}M\Flder M$ its tangent and cotangent bundle projections respectively. Let $N\subset M$ be a submanifold and $f:N\Flder\R$ a function. Then
\begin{multline*}
    \Sigma_f = \bigl\{ p \in T^{\ast}M \mid \pi _M (p) \in N \text{
        and } \left\langle p, v \right\rangle = \left\langle
        \mathbf{d}f , v \right\rangle \\
      \text{ for all } v \in T N \subset TM \text{ such that } \tau_M (v) =\pi_M (p) \bigr\}
  \end{multline*}
  is a Lagrangian submanifold of $T^{\ast}M$.
\end{theorem}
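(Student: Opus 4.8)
The plan is to verify the two defining properties of a Lagrangian submanifold directly in local coordinates, after first checking that $\Sigma_f$ is a smooth submanifold of $T^{\ast}M$ of the correct dimension. Choose adapted local coordinates $(x^1,\dots,x^k,x^{k+1},\dots,x^n)$ on $M$ near a point of $N$ such that $N$ is given by $x^{k+1}=\cdots=x^n=0$; write the induced cotangent coordinates as $(x^i,p_i)$ on $T^{\ast}M$. Extend $f$ to a smooth function $\tilde f$ defined on a neighbourhood in $M$ (this is possible since $N$ is an embedded submanifold). The condition $\pi_M(p)\in N$ forces $x^{k+1}=\cdots=x^n=0$, and the pairing condition $\langle p,v\rangle=\langle\mathbf d f,v\rangle$ for all $v\in T N$ with $\tau_M(v)=\pi_M(p)$ — noting that such $v$ range over the coordinate directions $\partial/\partial x^1,\dots,\partial/\partial x^k$ — forces $p_a=\partial\tilde f/\partial x^a$ for $a=1,\dots,k$, while $p_{k+1},\dots,p_n$ remain free. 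Hence in these coordinates
\[
\Sigma_f=\bigl\{(x^1,\dots,x^k,0,\dots,0,\;\partial_1\tilde f,\dots,\partial_k\tilde f,\;p_{k+1},\dots,p_n)\bigr\},
\]
which is manifestly a submanifold parametrised by $(x^1,\dots,x^k,p_{k+1},\dots,p_n)$, so $\dim\Sigma_f=k+(n-k)=n=\tfrac12\dim T^{\ast}M$. This already settles the dimension count, and one should remark that the description is independent of the chosen extension $\tilde f$ because the values $\partial_a\tilde f$ on $N$ depend only on $f$.

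Next I would check that the canonical one-form $\Theta_{T^{\ast}M}=p_i\,dx^i$ pulls back to an exact form on $\Sigma_f$, which by the Poincar\'e-type argument recalled just before the theorem gives $i_{\Sigma_f}^{\ast}\Omega_{T^{\ast}M}=0$. Parametrising $\Sigma_f$ by $(x^a,p_\mu)$ with $a\in\{1,\dots,k\}$ and $\mu\in\{k+1,\dots,n\}$, we have $dx^\mu=0$ along $\Sigma_f$, so
\[
i_{\Sigma_f}^{\ast}\Theta_{T^{\ast}M}=\sum_{a=1}^{k}\frac{\partial\tilde f}{\partial x^a}\,dx^a=\mathbf d\bigl(\tilde f|_N\bigr)=\mathbf d f,
\]
the restriction being taken along $N$. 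Therefore $i_{\Sigma_f}^{\ast}\Omega_{T^{\ast}M}=i_{\Sigma_f}^{\ast}\mathbf d\Theta_{T^{\ast}M}=\mathbf d\,i_{\Sigma_f}^{\ast}\Theta_{T^{\ast}M}=\mathbf d\mathbf d f=0$. Combined with the dimension count, this shows $\Sigma_f$ is Lagrangian. It is worth noting that $f$ itself (or rather $\tilde f|_N$) serves as the generating function of $\Sigma_f$ in the sense defined earlier; when $N=M$ the construction reduces to $\Sigma_f=\mathrm{Im}(\mathbf d f)$, recovering the special case mentioned above.

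The main obstacle is not any single computation but the bookkeeping needed to make the coordinate-free statement ``for all $v\in TN$ with $\tau_M(v)=\pi_M(p)$'' match the coordinate claim ``$p_a=\partial_a\tilde f$ for $a\le k$ and no constraint on $p_\mu$.'' One must be careful that $TN\subset TM$ here means the honest subbundle over $N$, so that at a point $q=\pi_M(p)\in N$ the relevant $v$ sweep out exactly $T_qN$, a $k$-dimensional space; the pairing condition is then $k$ linear equations on the $n$-dimensional fibre $T_q^{\ast}M$, cutting it down to an $(n-k)$-dimensional affine subspace — the affine part being $\mathbf d\tilde f(q)$ restricted appropriately and the linear part being the annihilator $(T_qN)^{\circ}$. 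Making this fibrewise picture precise, and confirming smoothness of the resulting family as $q$ varies over $N$ (which follows from smoothness of $\tilde f$ and of the adapted charts), is the only delicate point; the symplectic vanishing is then essentially automatic.
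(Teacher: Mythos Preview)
Your proof is correct and follows the same overall strategy as the paper's: work in local coordinates, verify the dimension count, and show that the pullback of the canonical two-form vanishes. The one substantive difference is in how $\Sigma_f$ is parametrised. You take adapted slice coordinates for $N$ and use the free conormal momenta $p_{k+1},\dots,p_n$ as fibre parameters; the paper instead describes $N$ by constraint functions $\phi^{\alpha}(q)=0$ and writes the condition $\langle p-\mathbf{d}f,v\rangle=0$ for all $v\in TN$ as $p=\mathbf{d}f+\lambda_{\alpha}\mathbf{d}\phi^{\alpha}$, so that the Lagrange multipliers $\lambda_{\alpha}$ play the role of your $p_{\mu}$. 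Your version is slightly cleaner for the bare theorem (and has the bonus of exhibiting $f$ explicitly as a generating function via $i_{\Sigma_f}^{\ast}\Theta=\mathbf{d}f$), while the paper's Lagrange-multiplier description is chosen deliberately: it is exactly the form that reappears in \eqref{tres.tres}--\eqref{vLSub} when the theorem is applied with $M=TQ$, $N=\Delta_Q$, $f=L_{\Delta_Q}$, and it is what the authors contrast with the nonholonomic case in the subsequent proposition.
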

Here, we shall prove this theorem in a different way from \cite{Tu1977}. Later, we will show the essential difference in geometry  between vakonomic and nonholonomic mechanics by making use of this proof.
\begin{proof}
Assume that $q^i$, $i=1,...,\mbox{dim}\,M$, are local coordinates for $M$. Assume also that $q^a$, $a=1,...,\mbox{dim}\,N$, are adapted local coordinates for $N\subset M$. Using these local coordinates, it is easily shown that $\Sigma_f$ is a submanifold of $T^{\ast}M$ with dimension equal to $\frac{1}{2}$ dim$\,M$. On the other hand, the submanifold $N$ shall be defined by a set of $\alpha=1,...,\mbox{dim}\,M-\mbox{dim}\,N$ constraints in the following way
\begin{equation}\label{tulCons}
\phi^{\alpha}(q)=0.
\end{equation}
To finish the proof, we need to show that $\Omega_{T^*M}$, i.e., the symplectic two-form on $T^{\ast}M$, vanishes when we restrict it to $N$. With this purpose, we take the Lie derivative of \eqref{tulCons}, which provides
\begin{equation}\label{vlift}
(\pounds_{v}\phi^{\alpha})(q):=\frac{d}{dt}\phi^{\alpha}(q(t))=\bra\de\phi^{\alpha}(q(t)),\,v(t)\ket=0,
\end{equation}
where $v(t)=dq(t)/dt$. Since $\bra\, p, v \ket =\bra\de f, v \ket$ for all $p\in T^{\ast}M$, where $\pi_M(p)\in N$, it follows from \eqref{vlift} that
\[
p-\de f=\lambda_{\alpha}\de\phi^{\alpha},
\]      
where $\lambda_{\alpha}$ are Lagrange multipliers. From the previous equation it follows 
\[
p=\de (f+\lambda_{\alpha}\phi^{\alpha}).
\]
Let us introduce the inclusion map $i:\Sigma_f\hookrightarrow T^{\ast}M$. Using Darboux's coordinates $(q,p)$ for $T^{\ast}M$, one has $\Omega_{T^*M}=dq\we dp$. By a direct computation, we arrive to
\[
i^{\ast}\Omega_M=dq\we\de\de(f+\lambda_{\alpha}\phi^{\alpha})= 0,
\]
since $\de^2=0$. This finishes the proof.
\end{proof}
\paragraph{Vakonomic Lagrangian Submanifolds.}
Next, we see how the vakonomic mechanics introduced in $\S$\ref{usual} may be fit into the context of the Lagrangian submanifold theory. 
Particularly, in Tulczyjew's theorem, setting $N=\DQ$, $M=TQ$ and $f=\LQ:\DQ\Flder\R$, we can develop a submanifold of $T^{\ast}TQ$ as
\begin{multline*}
    \Sigma _{\LQ} = \bigl\{ \alpha \in T^{\ast}TQ \mid \pi_{TQ}(\alpha) \in \DQ \text{
        and } \left\langle \alpha, w \right\rangle = \left\langle
        \mathbf{d}\LQ , w \right\rangle \\
      \text{ for all } w \in T \DQ \subset TTQ \text{ such that } \tau_{TQ} (w) =\pi_{TQ} (\alpha) \bigr\}.
  \end{multline*}
Consider the submanifold $\DQ\subset TQ$ defined by the vanishing of the $m$ local constraints $\phi^{\alpha}:TQ\Flder\R$ as
$$
\phi^{\alpha}(q,\dot q)=0,\qquad 1\leq\alpha\leq m < n.
$$
Hence
\[
(T\DQ)^{\circ}=\lc w=(\delta{q},\delta{\dot{q}}) \in T_{(q,\dot q)}TQ \; \middle|\; \left< \de\phi^{\alpha}(q,\dot q), w\right>=0 \rc. 
\]
Note that the above constraints allow us to consider {\it nonlinear nonholonomic constraints}.
Let $i_{\DQ}:\DQ\hookrightarrow TQ$ be the inclusion of the submanifold; one can take an arbitrary extension $L:TQ\Flder\R$ such that $L\circ i_{\DQ}=\LQ$. Applying theorem \ref{Tulczy} we obtain 
\begin{equation}\label{tres.tres}
\begin{split}
\Sigma_{\LQ}&=\left\{
(q, \dot q, \dot{p}, p)\in T^{\ast}TQ\; \biggm|\; \dot{p}=\frac{\partial L }{\partial q} +\lambda_{\alpha}\frac{\partial \phi^{\alpha} }{\partial q},\right. \\
&\qquad \left. \qquad\quad p=\frac{\partial L }{\partial \dot{q}} +\lambda_{\alpha}\frac{\partial \phi^{\alpha} }{\partial \dot{q}},\;\;\phi^{\alpha}(q,\dot q)=0,\,\,\, 1\leq\alpha\leq m \right\}.
\end{split}
\end{equation}
By definition this is a Lagrangian submanifold. Recall the local expression of the diffeomorphism $\kappa_Q: TT^{\ast}Q \to T^{\ast}TQ$; namely,
$$
(q, p, \dot q, \dot{p}) \mapsto (q, \dot q,  \dot{p}, p),
$$
we can construct the Lagrangian submanifold $\kappa^{-1}(\Sigma_{\LQ})\subset TT^{\ast}Q$ as
\begin{equation}\label{vLSub}
\begin{split}
\kappa_Q^{-1}(\Sigma_{\LQ})&=\left\{(q, p, \dot{q}, \dot{p})\in TT^{\ast}Q\;\biggm| \; 
p=\frac{\partial L }{\partial \dot{q}} +\lambda_{\alpha}\frac{\partial \phi^{\alpha} }{\partial \dot{q}},\right. \\
&\qquad \left.\qquad\quad  \dot{p}=\frac{\partial L }{\partial q} +\lambda_{\alpha}\frac{\partial \phi^{\alpha} }{\partial q},\;\;\phi^{\alpha}(q,\dot q)=0,\,\,\, 1\leq\alpha\leq m\right\}\; .
\end{split}
\end{equation}
The solution curve for the dynamics determined by $\kappa_Q^{-1}(\Sigma_{\LQ})\subset TT^{\ast}Q$ is given by $\gamma: I=[t_{1},t_{2}]\subset \R\to T^{\ast}Q$ such that 
$$
\frac{\mathrm{d}\gamma}{\mathrm{d}t}(I)\subset \kappa_Q^{-1}(\Sigma_{\LQ}). 
$$
It is apparent that this verifies the set of differential-algebraic equations in \eqref{VDynamics}.
\medskip

Thus, the Lagrangian submanifold $\kappa_Q^{-1}(\Sigma_{\LQ})$ encloses the vakonomic mechanics, which we shall call the {\bfi vakonomic Lagrangian submanifold}. An analogous discussion can be found in \cite{deLeJimdeM2012}.

This development shows the importance of Theorem \ref{Tulczy} as a key tool in the intrinsic description of Lagrangian and Hamiltonian dynamics, as well as vakonomic mechanics (see \cite{Tu1976a} and \cite{Tu1976b} for further details).

\paragraph{The Lagrange-d'Alembert Principle for Nonholonomic Mechanics.}\label{BriefNHolo}
Let us now formulate the nonholonomic mechanics by using the Lagrange-d'Alembert Principle.
Let $\DQ\subset TQ$ be a regular distribution given by
\begin{equation}\label{distribution}
\Delta_Q(q)=\lc \dot q \in T_{q}Q\;|\;\bra \mu^{\alpha}(q),\dot q\ket=0,\,\alpha=1,...,m <n\rc,
\end{equation}
where the one-forms $\mu^{\alpha}$ are nonintegrable. Note that we consider the case of linear constraints. Given a Lagrangian function $L:TQ\Flder\R$, the dynamics of the nonholonomic mechanical system is determined by the Lagrange-d'Alembert principle, which states that a curve $q(t),\;t \in I$ is an admissible motion of the system if 
\[
\delta \int_{t_1}^{t_2} L(q(t),\dot q(t))\,dt=0,
\]
where we choose variations $\delta q(t)$  that satisfy $\delta q(t)\in\DQ(q(t))$ at each $t$ and with the endpoints of $q(t)$ fixed. Assume that  $\mathrm{rank}\;\DQ(q)=2n-m$ at each point $q$. Since the annihilator $\DQ^{\circ}$ is generated by a set of $m$ independent one-forms as
\[
\DQ^{\circ}(q)=\mbox{span}\lc\mu^{\alpha}=\mu_i^{\alpha}(q)\,dq^i\rc,\,\,\,\alpha=1,...,m,
\]
the equations of motion of the nonholonomic mechanics are locally given by
\begin{equation}\label{NonHEq}
\begin{split}
&\frac{d}{dt}\lp\frac{\der L}{\der\dot q^i}\rp-\frac{\der L}{\der q^i}=\lambda_{\alpha}\mu_i^{\alpha}(q),\\[3mm]
&\mu^{\alpha}_i(q)\dot q^i=0,
\end{split}
\end{equation}
where $\lambda_{\alpha}$ are Lagrange multipliers. 

\paragraph{Nonholonomic Dynamical Submanifolds.}
The dynamics of a mechanical system with nonholonomic constraints \eqref{distribution} may be represented by a submanifold
\begin{multline*}
  \Sigma^{\rm nonh} = \bigl\{ \alpha \in T^{\ast}TQ \mid \pi_{TQ}(\alpha) \in \DQ \text{
        and } \left\langle \alpha, w \right\rangle = \left\langle
        \mathbf{d}L , w \right\rangle \\
      \text{ for all } w \in (T\tau_{Q})^{-1}(\DQ) \subset TTQ \text{ such that } \tau_{TQ} (w) =\pi_{TQ} (\alpha) \bigr\}.
  \end{multline*}
The main difference with the previous case is that the lifted distribution $(T\tau_{Q})^{-1}(\DQ) \subset TTQ$ is given by
\[
(T\tau_{Q})^{-1}(\DQ)=\lc w=(\delta{q},\delta\dot{q}) \in T_{(q,\dot{q})}TQ \; \middle|\; T\tau_{Q}(w)=(q,\delta{q}) \in \Delta_{Q} \rc.
\]
Recall at this point that the input data defining the nonholonomic dynamics is a Lagrangian function $L:TQ\Flder\R$ and a regular distribution $\DQ\subset TQ$, whose annihilator $\DQ^{\circ}$ is spanned by the $m$ independent one-forms $\mu^{\alpha}$. Similar to the definition of the vakonomic Lagrangian submanifold \eqref{vLSub}, we can define the {\bfi nonholonomic dynamical submanifold} as 
\begin{equation}\label{nhdynSub}
\begin{split}
\kappa_Q^{-1}\lp\Sigma^{\rm nonh}\rp&=\left\{(q, p, \dot{q}, \dot{p})\in TT^{\ast}Q\; \biggm|  \;
p=\frac{\partial L }{\partial \dot{q}}, \right. \\
& \quad\quad \left.\dot{p}=\frac{\partial L }{\partial q} +\lambda_{\alpha}\mu^{\alpha}(q),\;\;\mu_i^{\alpha}(q)\,\dot q^i=0,
\,\,\, 1\leq\alpha\leq m \right\}\; .
\end{split}
\end{equation}
The solution curve for the dynamics represented by $\kappa_Q^{-1}\lp\Sigma^{\rm nonh}\rp$ is given by $\sigma: I\subset \R\to Q$ such that 
$$
\frac{\mathrm{d}\sigma}{\mathrm{d}t}(I)\subset\DQ
$$ 
and the induced curve $\gamma: \R\to T^{\ast}Q$ defined by 
$
\gamma=\F L\left(\frac{\mathrm{d}\sigma}{\mathrm{d}t}\right)
$
verifies that 
$$
\frac{\mathrm{d}\gamma}{\mathrm{d}t}(I)\subset \kappa_Q^{-1}\lp\Sigma^{\rm nonh}\rp,
$$
where $\F L:TQ\Flder T^{\ast}Q$ is the Legendre transformation associated with $L$.
 Locally, $\sigma$ must satisfy equations \eqref{NonHEq}. Therefore, $\kappa_Q^{-1}\lp\Sigma^{\rm nonh}\rp$ encloses the nonholonomic dynamics (see also \cite{deLeJimdeM2012}).
\medskip

Now, we have the following proposition for the nonholonomic dynamical submanifold.

\begin{proposition}
The nonholonomic dynamical submanifold $\kappa_Q^{-1}\lp\Sigma^{\rm nonh}\rp$ is not a Lagrangian submanifold of $TT^{\ast}Q$.
\end{proposition}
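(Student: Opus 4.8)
The plan is to show directly that the pullback of the canonical symplectic form $\Omega_{TT^{\ast}Q}$ to the submanifold $\kappa_Q^{-1}\lp\Sigma^{\rm nonh}\rp$ does not vanish, which by the definition of a Lagrangian submanifold (recalled in \S\ref{LagSubmani}) suffices, since the dimension count is the same as in the vakonomic case and poses no obstruction. Concretely, I would work in Darboux coordinates $(q,p,\dot q,\dot p)$ on $TT^{\ast}Q$, where $\Omega_{TT^{\ast}Q}=dq\wedge d\delta p+d\delta q\wedge dp$ (in the notation of \S\ref{dOmega}), and parametrize $\kappa_Q^{-1}\lp\Sigma^{\rm nonh}\rp$ using the defining relations $p=\partial L/\partial\dot q$, $\dot p=\partial L/\partial q+\lambda_\alpha\mu^\alpha(q)$, together with the constraint equations $\mu_i^\alpha(q)\dot q^i=0$.

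The key contrast to exploit is the one already emphasized in the text: in the vakonomic case one had $p=\de(f+\lambda_\alpha\phi^\alpha)$ globally on the submanifold, so $i^{\ast}\Omega$ reduced to $dq\wedge\de\de(\cdots)=0$; here, by contrast, $\dot p$ contains the term $\lambda_\alpha\mu^\alpha(q)$ with $\mu^\alpha$ a \emph{nonintegrable} one-form, and $p=\partial L/\partial\dot q$ alone is not of the form $\de(\text{something})$ along the constrained directions. So I would compute the inclusion pullback $i^{\ast}\Omega_{TT^{\ast}Q}$ explicitly. Differentiating the defining relations and substituting, the terms coming from $L$ assemble into exact pieces that cancel as before, but the term $d(\lambda_\alpha\mu_i^\alpha(q)\,dq^i)=d\lambda_\alpha\wedge\mu_i^\alpha\,dq^i+\lambda_\alpha\,\der_j\mu_i^\alpha\,dq^j\wedge dq^i$ survives, and the second summand is controlled precisely by the (non)vanishing of $d\mu^\alpha$. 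I would then argue that on the tangent space to the submanifold this two-form is genuinely nonzero: because the $\mu^\alpha$ are nonintegrable, there is no way to absorb $d\mu^\alpha$ into a total differential, and one can exhibit a pair of tangent vectors on which $i^{\ast}\Omega_{TT^{\ast}Q}$ evaluates nontrivially — for instance using a $\delta\lambda$ variation paired against a $\delta q$ variation lying in $\DQ$, which gives $\delta\lambda_\alpha\,\mu_i^\alpha\,\delta q^i$, or a pair of $\DQ$-valued $\delta q$ variations detecting $d\mu^\alpha$.

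The main obstacle I anticipate is organizational rather than conceptual: one must be careful to compute the pullback \emph{along the submanifold}, i.e.\ restrict all one-forms to $T\lp\kappa_Q^{-1}\lp\Sigma^{\rm nonh}\rp\rp$ before wedging, since off the submanifold many of these expressions are not closed and sloppy bookkeeping could make spurious terms appear or real ones vanish. In particular I would make explicit which variables are independent coordinates on the submanifold (say $q$, $\dot q$ constrained by $\mu^\alpha_i\dot q^i=0$, and $\lambda$) and express $dp$, $d\dot p$ in terms of $dq$, $d\dot q$, $d\lambda$ via the defining relations before substituting into $\Omega_{TT^{\ast}Q}$. A clean alternative, which I would mention, is to pick a concrete low-dimensional example with a genuinely nonholonomic constraint (the vertical rolling coin, treated later in \S6, or Chaplygin-type constraints) and simply display two tangent vectors with $i^{\ast}\Omega_{TT^{\ast}Q}\neq 0$; a single counterexample already establishes the negative statement, and it makes transparent that the obstruction is exactly the nonintegrability of $\DQ$, which is the geometric point the proposition is meant to highlight.
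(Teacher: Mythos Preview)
Your approach is essentially the paper's: pull back $\Omega_{TT^{\ast}Q}$ along the inclusion, watch the $L$-contributions cancel by symmetry of mixed partials, and identify the surviving obstruction as $\lambda_{\alpha}\,\partial_{j}\mu_{i}^{\alpha}\,dq^{i}\wedge dq^{j}$, nonzero precisely because the $\mu^{\alpha}$ are not closed. The paper does exactly this computation and stops there.

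One small slip to fix: your first proposed test pair, a $\delta\lambda$ against a $\delta q\in\DQ$, yields $\delta\lambda_{\alpha}\,\mu_{i}^{\alpha}\,\delta q^{i}=0$ by the very definition of $\DQ$, so it detects nothing. Your second test pair --- two $\delta q$-variations picking up $d\mu^{\alpha}$ --- is the correct one and is what the paper (implicitly) uses. Note also that $q$ is an unconstrained coordinate on the submanifold (only $\dot q$ is constrained), so variations $\delta q$ need not lie in $\DQ$; in fact you are more careful than the paper in retaining the $d\lambda_{\alpha}\wedge\mu_{i}^{\alpha}\,dq^{i}$ term, which the paper's displayed computation silently omits, and that term is itself nonzero once you allow $\delta q\notin\DQ$. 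Either route lands the conclusion.
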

\begin{proof}
To prove this proposition, recall that the iterated tangent bundle $TT^{\ast}Q$ has a symplectic structure defined by $\Omega_{TT^{\ast}Q}$, which  has the local form $\Omega_{TT^{\ast}Q}=dq\we d\dot p+d\dot q\we dp$, where $(q,p,\dot q,\dot p)$ are local coordinates for $TT^{\ast}Q$.

Using local coordinates, it is clear to see that dim$\,\kappa_Q^{-1}\lp\Sigma^{\rm nonh}\rp=\frac{1}{2}$ dim$\,TT^{\ast}Q$. Let
\[
i:\kappa_Q^{-1}\lp\Sigma^{\rm nonh}\rp\hookrightarrow TT^{\ast}Q
\]
be the inclusion defined in \eqref{nhdynSub} and we have to check if $i^{\ast}\Omega_{TT^{\ast}Q}=0$ {\it does not} hold in order to accomplish the proof. By direct computations, it leads to
\begin{eqnarray*}
i^{\ast}\Omega_{TT^{\ast}Q}&=&\frac{\der ^2L}{\der q^i\der q^j}dq^i\we dq^j+\frac{\der ^2L}{\der q^i\der\dot q^j}dq^i\we d\dot q^j+\lambda_{\alpha}\frac{\der\mu_i^{\alpha}}{\der q^j}d q^i\we  dq^j\\
&+&\frac{\der ^2L}{\der\dot q^i\der q^j}d\dot q^i\we dq^j+\frac{\der ^2L}{\der\dot q^i\der\dot q^j}d\dot q^i\we d\dot q^j.
\end{eqnarray*} 
It follows from symmetric properties that this reduces to
$$
i^{\ast}\Omega_{TT^{\ast}Q}=\lambda_{\alpha}\displaystyle\frac{\der\mu_i^{\alpha}}{\der q^j}dq^i\we dq^j.
$$
Thus we conclude $i^{\ast}\Omega_{TT^{\ast}Q}\neq 0$ since in general ${\der\mu_i^{\alpha}}/{\der q^j} \ne {\der\mu_j^{\alpha}}/{\der q^i}$.
\end{proof}
This last result implies that nonholonomic dynamics cannot be described in terms of Lagrangian submanifolds as we claimed.

\paragraph{Remark.}
Poisson and almost-Poisson manifolds have been widely used in the geometrical description of nonholonomic mechanics (see for instance \cite{CLM1999}, \cite{ILMM1998}, \cite{KM1998}). 
A different notion of {\it Lagrangian submanifold} (based in \cite{LibMarle1987} and \cite{Vaisman1994}) has been developed in \cite{LMV2012} in the context of almost-Poisson geometry in order to construct a universal Hamilton-Jacobi theory including nonholonomic mechanics.

\begin{remark}\rm
As shown in equations \eqref{VDynamics} and \eqref{NonHEq}, dynamical equations of both vakonomic and nonholonomic mechanics are clearly different, which comes from the fact that vakonomic mechanics {\it can be} a Lagrangian submanifold while nonholonomic mechanics {\it cannot}. This difference is clarified from the viewpoint of variational principles; namely, vakonomic mechanics is purely variational since we impose the constraints on the class of curves before applying the variations, while the nonholonomic mechanics is not variational since we impose the constraints to variations of curves after taking variations for the action integral. In other words, for the vakonomic mechanics, the admissible trajectories must lie in $\DQ$ and the admissible variations must be tangent to $\DQ$, while for the nonholonomic mechanics, the admissible variations are generated by infinitesimal variations such that their vertical lift takes values in $T\DQ$. Of course, this reflects the fact that nonholonomic mechanics is the one describing the actual motion of the mechanical systems with nonholonomic constraints, while vakonomic mechanics is not. For this perspective,  see also \cite{Cata} and \cite{Respondek} and references therein. For a historical review on this topic, see \cite{Histo}.
\end{remark}

%------------------------------------------------------------------------------------------------------------

\section{Dirac Structures in Nonholonomic Mechanics} \label{Dirac_induced_section}

As shown in the previous section, nonholonomic mechanics cannot be represented by Lagrangian submanifolds. In this section, we shall show how nonholonomic mechanics can be described in the context of induced Dirac structures and associated implicit Lagrangian systems, following \cite{YoMa2006a}.

\paragraph{Dirac Structures.} We first recall the definition of a {\it Dirac structure on a vector space} $V$, say finite dimensional for simplicity (see \cite{Cour1990a} and \cite{CoWe1988}). Let $V^{\ast}$ be the dual space of $V$, and $\langle\cdot \, , \cdot\rangle$
be the natural paring between $V^{\ast}$ and $V$. Define the
symmetric paring
$\langle \! \langle\cdot,\cdot \rangle \!  \rangle$
on $V \oplus V^{\ast}$ by
\begin{equation*}
\langle \! \langle\, (v,\alpha),
(\bar{v},\bar{\alpha}) \,\rangle \!  \rangle
=\langle \alpha, \bar{v} \rangle
+\langle \bar{\alpha}, v \rangle,
\end{equation*}
for $(v,\alpha), (\bar{v},\bar{\alpha}) \in V \oplus V^{\ast}$.
A {\it Dirac structure} on $V$ is a subspace $D \subset V \oplus
V^{\ast}$ such that
$D=D^{\perp}$, where $D^{\perp}$ is the orthogonal
of $D$ relative to the pairing
$\langle \! \langle \cdot,\cdot \rangle \!  \rangle$.
\medskip

Now let $M$ be a given manifold and let $TM \oplus T^{\ast}M$ denote the Pontryagin bundle over $M$. A  subbundle $ D \subset TM \oplus T^{\ast}M$ is called a {\it Dirac structure on the bundle} $\tau_M:TM \to M$, when $D(x)$ is a Dirac structure on the vector space $T_{x}M$ at each point $x \in M$. A given two-form $\Omega$ on $M$ together with a distribution $\Delta_{M}$ on $M$ determines a Dirac structure on $M$ as follows\footnote{Precisely speaking, $D$ is called an {\it almost} Dirac structure, while for the case in which the distribution is integrable, $D$ is called a Dirac structure. In this paper, however, we simply call $D$ the Dirac structure unless otherwise stated.}.
\begin{proposition}\label{DProof}
The two-form $\Omega$ determines a Dirac structure $D$ on $M$ whose fiber is given for each $x\in M$ as
\begin{equation}\label{DiracManifold}
\begin{split}
D(x)=\{ (v_{x}, \alpha_{x}) \in T_{x}M \times T^{\ast}_{x}M
  \; \mid \; & v_{x} \in \Delta_{M}(x), \; \mbox{and} \\ 
  & \alpha_{x}(w_{x})=\Omega_{\Delta_{M}}(v_{x},w_{x}) \; \;
\mbox{for all} \; \; w_{x} \in \Delta_{M}(x) \},
\end{split}
\end{equation}
where $\Delta_M\subset TM$ and  $\Omega_{\Delta_{M}}$ is the restriction of $\Omega$ to $\Delta_{M}$.
\end{proposition}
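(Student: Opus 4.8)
The plan is to check the defining property of a Dirac structure pointwise. Fix $x \in M$ and abbreviate $V = T_xM$, $\Delta = \Delta_M(x)$, and let $\omega$ denote the restriction to $\Delta$ of the skew-symmetric bilinear form $\Omega(x)$; what must be shown is that the subspace $D(x) \subset V \oplus V^{\ast}$ described in \eqref{DiracManifold} satisfies $D(x) = D(x)^{\perp}$, the orthogonal complement being taken relative to the symmetric pairing $\langle\!\langle\cdot,\cdot\rangle\!\rangle$. Since that pairing is nondegenerate on $V \oplus V^{\ast}$, one has automatically $\dim D(x) + \dim D(x)^{\perp} = 2\dim V$. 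Hence it suffices to prove two things: (i) isotropy, $D(x) \subseteq D(x)^{\perp}$; and (ii) the dimension count $\dim D(x) = \dim V$. Together these force $\dim D(x)^{\perp} = \dim D(x)$ and therefore $D(x) = D(x)^{\perp}$.

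For (i) I would take two elements $(v,\alpha),(\bar v,\bar\alpha) \in D(x)$, so that $v,\bar v \in \Delta$ and $\alpha(w) = \omega(v,w)$, $\bar\alpha(w) = \omega(\bar v,w)$ for all $w \in \Delta$, and simply evaluate: $\langle\!\langle (v,\alpha),(\bar v,\bar\alpha)\rangle\!\rangle = \alpha(\bar v) + \bar\alpha(v) = \omega(v,\bar v) + \omega(\bar v,v) = 0$, the last equality because $\omega$ is skew-symmetric, being the restriction of a two-form. For (ii) I would analyze the projection $D(x) \to \Delta$, $(v,\alpha)\mapsto v$: it is onto, since for $v \in \Delta$ the functional $\omega(v,\cdot)$ on $\Delta$ extends to some $\alpha \in V^{\ast}$; and its fibre over any given $v$ is a coset of the annihilator $\Delta^{\circ} = \{\alpha \in V^{\ast} : \alpha|_{\Delta}=0\}$, of dimension $\dim V - \dim \Delta$. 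Thus $\dim D(x) = \dim\Delta + (\dim V - \dim\Delta) = \dim V$. A more hands-on alternative, which I might include as a remark, is to prove the reverse inclusion directly: if $(v,\alpha)\in D(x)^{\perp}$, pairing against the elements $(0,\beta)\in D(x)$ with $\beta \in \Delta^{\circ}$ forces $\beta(v)=0$ for all such $\beta$, hence $v \in \Delta$; then pairing against $(\bar v,\bar\alpha)\in D(x)$ with $\bar v \in \Delta$ arbitrary yields $\alpha(\bar v) = -\bar\alpha(v) = -\omega(\bar v,v) = \omega(v,\bar v)$, so $(v,\alpha)\in D(x)$. Finally, regularity of $\Delta_M$ and smoothness of $\Omega$ make the $D(x)$ vary smoothly, so $D$ is a genuine subbundle.

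I do not anticipate a real obstacle here; the statement is essentially linear algebra fibrewise. The only place wanting a little care is the bookkeeping in step (ii) — correctly identifying the fibre of $D(x)\to\Delta$ as a torsor over $\Delta^{\circ}$ — or, in the direct-inclusion version, noting that the choice of extension of $\omega(\bar v,\cdot)$ to $V^{\ast}$ is irrelevant precisely because $v$ has already been shown to lie in $\Delta$. I would present the dimension-count argument as the main proof and mention the direct verification parenthetically.
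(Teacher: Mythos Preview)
Your proposal is correct. For isotropy $D(x)\subseteq D(x)^{\perp}$ you argue exactly as the paper does, via skew-symmetry of the restricted two-form. The difference is in the second half: the paper establishes the reverse inclusion $D(x)^{\perp}\subseteq D(x)$ directly---essentially your ``hands-on alternative''---rather than by a dimension count. Your primary argument, computing $\dim D(x)=\dim V$ via the surjection $D(x)\to\Delta$ whose fibres are cosets of $\Delta^{\circ}$, is clean and structural, and it makes transparent why the construction works regardless of whether $\Omega$ is degenerate. The paper's direct verification is more elementary but, as written, somewhat elliptical: it does not isolate the preliminary step (which you do spell out in your alternative) of pairing $(u,\beta)\in D(x)^{\perp}$ against elements $(0,\beta')$ with $\beta'\in\Delta^{\circ}$ to force $u\in\Delta$ before invoking the defining relation for $\alpha$. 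Either route suffices; yours is arguably the tidier presentation.
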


\begin{proof}

The orthogonal of $ D\subset TM\oplus T^*M$ at the point $x\in M$ is given by
\begin{multline*}
    D^{\perp}(x) = \bigl\{ (u_x,\beta_x)\in T_xM\times T_x^*M|\,\,\alpha_x(u_x)+\beta_x(v_x)=0,\,\,\forall\, v_x\in\Delta_M \\
       \mbox{and}\,\,\bra\alpha_x,w_x\ket=\Omega_{\Delta_M}(x)(v_x,w_x)\,\,\,\mbox{for all}\,\,\,w_x\in \Delta_M\bigr\}.
  \end{multline*}
In order to prove that $D\subset D^{\perp}$, let $(v_x,\alpha_x),\, (v_x^{\prime},\alpha_x^{\prime})$ belong to $ \in D(x)$. Then
\[
\bra\alpha_x,\,v_x^{\prime}\ket+\bra\alpha_x^{\prime},\,v_x\ket=\Omega_{\Delta_M}(x)(v_x,v_x^{\prime})+\Omega_{\Delta_M}(x)(v_x^{\prime},v_x)=0,
\]
since $\Omega_{\Delta_M}(x)$ is skew-symmetric. Therefore, $D\subset D^{\perp}$.

To conclude the proof we shall check that $ D^{\perp}\subset D$. Let $(u_x,\beta_x)\in D(x)^{\perp}$. By definition of $D^{\perp}$, we have that
\[
\bra\alpha_x,\,u_x\ket+\bra\beta_x,\,v_x\ket=0
\]
for all $v_x\in \Delta_M$ and $\bra\alpha_x,\,w_x\ket=\Omega_{\Delta_M}(x)(v_x,w_x)$ for all $w_x\in\Delta_M$. Choose $v_x\,,\, u_x\in\Delta_M$ arbitrary vectors.  From $\bra\alpha_x,\,u_x\ket+\bra\beta_x,\,v_x\ket=0$ and the fact that $u_x\in\Delta_M$ is an arbitrary vector we have   that
\[
\Omega_{\Delta_M}(x)(v_x,u_x)+\beta_x(v_x)=0\,\,\,\mbox{for all}\,\,v_x\in \Delta_M,
\]
that is $\beta_x(v_x)=\Omega_{\Delta_M}(x)(u_x,v_x)$ due to the skew-symmetry of $\Omega_{\Delta_M}$. Thus, $(u_x,\beta_x)\in D(x)$ and hence $D^{\perp}\subset D$, as required. Consequently, $D^{\perp}=D$ and the claim holds.
\end{proof}

\paragraph{Remark.}
Of course, the proof above is also valid when $\Delta_M=TM\,\, (\Omega_{\Delta_M}=\Omega)$ and, furthermore, either for pre-symplectic or symplectic two-forms since the key property to accomplish the result is their skew-symmetry. On the other hand, throughout this work we shall define the Dirac structures in a different but equivalent way to proposition \ref{DProof}. Namely, each two-form $\Omega$ on $M$ defines a bundle map $\Omega^{\flat}:TM\Flder T^*M$ by
$\Omega^{\flat} \cdot v =\Omega(v,\cdot)$.
Consequently, we may equivalently define $D(x)$ in \eqref{DiracManifold} as
\begin{equation*}
\begin{split}
D(x)=\{ (v_{x}, \alpha_{x}) \in T_{x}M \times T^{\ast}_{x}M
  \; \mid \;  v_{x} \in \Delta_{M}(x), \; \mbox{and} \;
   \alpha_{x}-\Omega^{\flat}(x) \cdot v_{x} \in \Delta^{\circ}_{M}(x) \;
 \}.
\end{split}
\end{equation*}

We call a Dirac structure $D$  {\it integrable} if  the condition
\[
\langle \pounds_{X_1} \alpha_2, X_3 \rangle
+\langle \pounds_{X_2} \alpha_3, X_1 \rangle+\langle \pounds_{X_3}
\alpha_1, X_2 \rangle=0
\]
is satisfied for all pairs of vector fields and one-forms $(X_1, \alpha_1)$,
$(X_2,\alpha_2)$, $(X_3,\alpha_3)$ that take values in $D$,
where $\pounds_{X}$ denotes the Lie derivative  along the vector
field $X$ on $M$. 

\paragraph{Induced Dirac Structures.} \label{induced}
One of the most important and interesting Dirac structures in mechanics is the one that is induced from kinematic constraints, either holonomic or nonholonomic. This Dirac structure plays an 
essential role in the definition of implicit Lagrangian systems (or, alternatively, Lagrange-Dirac systems).
\medskip

Let $\Delta_{Q} \subset TQ$ be a regular distribution on $Q$ and define a lifted distribution on $T^{\ast}Q$ by
\begin{equation*}
\Delta_{T^{\ast}Q}
=( T\pi_{Q})^{-1} \, (\Delta_{Q}) \subset TT^{\ast}Q,
\end{equation*}
where $\pi_{Q}:T^{\ast}Q \to Q$ is the canonical projection so that its tangent is a map
$T\pi_{Q}:TT^{\ast}Q \to TQ$. Let $\Omega_{T^*Q}$ be the canonical two-form on $T^{\ast}Q$.  
The {\it induced Dirac structure} $D_{\Delta_Q}$ on $T^{\ast}Q$, is the subbundle of $T  T^{\ast}Q \oplus T ^{\ast} T^{\ast}Q$, whose fiber is given for each $p_{q} \in
T^{\ast}Q$ as
\begin{equation}\label{InducedDirac}
\begin{split}
D_{\Delta_Q}(p_{q})=\{ (v_{p_{q}}, \alpha_{p_{q}})
\in T_{p_{q}}T^{\ast}Q \times T^{\ast}_{p_{q}}T^{\ast}Q  & \mid
v_{p_{q}} \in
\Delta_{T^{\ast}Q}(p_{q})  \;\mbox{and} \;   \\ 
&\alpha_{p_{q}}- \Omega^{\flat}(p _q) (v_{p_{q}}) \in \Delta^{\circ}_{T^{\ast}Q}(p_{q}) \}.
\end{split}
\end{equation}

\paragraph{Local Representation of the Dirac Structure.}
Let $q^i$ be local coordinates on $Q$ so that locally, $Q$ is represented by an open set $U \subset
\mathbb{R}^n$. The constraint set $\Delta_Q$  defines a subspace of $TQ$,
which we denote by $\Delta(q) \subset \mathbb{R}^n$ at each point
$q \in U $. If we let the dimension of the constraint space be $n-m$,
then we can choose a basis $e _{m+1}(q), e _{m+2}(q),\ldots, e _n (q)$ of
$\Delta(q)$.

It is also common to represent constraint sets as the simultaneous kernel of a number of
constraint one-forms; that is, the annihilator of $\Delta(q)$, which is denoted by 
$\Delta^{\circ}(q)$, is
spanned by such one-forms, that we write as
$\mu^{1}, \mu^{2}, \ldots, \mu^{m}$.
Now writing the projection map $\pi_Q: T^{\ast}Q\rightarrow Q $
locally as $(q,p) \mapsto q$, its tangent map is locally given by
$T\pi_Q : (q, p, \dot{q}, \dot{p}) \mapsto (q, \dot{q})$. Thus, we
can locally represent $\Delta_{T^{\ast}Q}$ as
\[
\Delta_{T^{\ast}Q} \cong \left\{ (q,p, \dot{q}, \dot{p} )
\mid q \in U, \dot{q} \in \Delta (q) \right\}.
\]
Let us denote a point in $T ^{\ast} T ^{\ast} Q $ by
$(q,p, \alpha, w )$, where $\alpha$ is a covector
and $w $ is a vector, notice that the annihilator of $\Delta_{T^{\ast}Q}$ is
locally,
\[
\Delta^{\circ}_{T^{\ast}Q} \cong \left\{ (q,p,
\alpha, w )
\mid q \in U,\, \alpha \in \Delta^{\circ} (q) \; \mbox{and} \; w
= 0
\right\}.
\]
Recall the symplectomorphism $\Omega^{\flat}_{T^*Q}:TT^*Q\Flder T^*T^*Q$ is given in local by
\[
\Omega^\flat_{T^*Q} (q,p)(\dot{q}, \dot{p} ) = (- \dot{p}, \dot{q}).
\]
Thus, it follows from equation \eqref{InducedDirac} that the local expression of the induced Dirac structure is given by
\begin{align}\label{localdirac}
D_{\Delta_Q}(q,p)  =
\left\{
\left( (q,p, \dot{q}, \dot{p}), (q,p, \alpha, w) \right) \mid
\dot{q} \in \Delta (q), \; w = \dot{q},  \; \mbox{and} \;
\alpha +\dot{p} \in \Delta^{\circ}(q)
\right\}.
\end{align}

%------------------------------------------------------------------------------------

\paragraph{Lagrange-Dirac Dynamical Systems.} Following \cite{YoMa2006a, YoMa2006b}, we shall briefly see the theory of Lagrange-Dirac systems. 
Let $L:TQ \to \mathbb{R}$ be a Lagrangian, possibly degenerate. The differential $ \mathbf{d} L:TQ \rightarrow T^{\ast}TQ$ of $L$ is the one-form on $TQ$ locally given by
\[
\mathbf{d} L(q,v)= \left( q,v, \frac{\partial L}{\partial q}, \frac{\partial L}{\partial v}\right) .  
\]
Using the canonical diffeomorphism $ \gamma_{Q}:T^{\ast}TQ \rightarrow T^{\ast}T^{\ast}Q$, we define the {\it Dirac differential} of $L$ by 
\[
\mathbf{d}_{D} L:= \gamma_{Q} \circ \mathbf{d} L,
\]
which is locally given by
$$
\mathbf{d}_{D} L(q,v)= \left(q,\frac{\partial L}{\partial v}, - \frac{\partial L}{\partial q},  v \right),
$$
where $(q,v)$ are local coordinates for $TQ$, $(q, p)$ for $T^*Q$ and $(q,v,p)$ for $TQ\oplus T^*Q$.

\begin{definition}[\bf Lagrange-Dirac dynamical systems]
The equations of motion of a {\rm Lagrange-Dirac dynamical system} (or an implicit Lagrangian system)  $(Q, \Delta_Q, L)$ are given by 
\begin{equation}\label{LDirac_system} 
\left( (q(t),p(t),\dot{q}(t),\dot{p}(t)), \mathbf{d}_{D} L(q(t),v(t))\right)  \in D_{ \Delta _Q }(q(t),p(t)).
\end{equation} 
Any curve $(q(t),v(t),p(t)) \in TQ \oplus T^{\ast}Q,\,t_{1} \le t \le t_{2}$ satisfying \eqref{LDirac_system} is called a {\rm solution curve} of the Lagrange-Dirac dynamical system.
\end{definition}
It follows from \eqref{localdirac}  and \eqref{LDirac_system} that $(q(t),v(t),p(t))$, $t _1 \leq t \leq t _2 $ is a solution curve if and only if it satisfies the implicit Lagrange-d'Alembert equations
\begin{equation}\label{ExtLagDAEqn_Local}
p =\frac{\partial L}{\partial v }, \quad \dot{q} =v \in \Delta(q), \quad  \dot{p} - \frac{\partial L}{\partial q}
\in \Delta^{\circ}(q).
\end{equation}
Notice that the equations \eqref{ExtLagDAEqn_Local} are equal to the nonholonomic equations in \eqref{NonHEq}.

\begin{remark}{\rm Note that the equation $p ={\partial L}/{\partial v }$ arises from the equality of the base points $(q,p)$ and $\left(q,{\partial L}/{\partial v}\right)$ in \eqref{LDirac_system}. }
\end{remark}

\paragraph{Energy Conservation for Implicit Lagrangian Systems.}
Let  $(Q, \Delta_{Q}, L)$ be a  Lagrange-Dirac dynamical system.
Define the generalized energy function $E$ on $TQ \oplus T^{\ast}Q$ by
\[
E(q,v,p)=\langle p, v \rangle -L(q,v).
\]
If $(q(t), v(t),p(t))$ in $TQ \oplus T^{\ast}Q$ is a solution curve of the Lagrange-Dirac system $(Q, \Delta_{Q}, L)$, then the energy $E(q(t),v(t),p(t))$ is constant along the solution curve. This is shown as follows:
\[
\frac{d}{dt} E = \left\langle \dot{p}, v \right\rangle
                   + \left\langle p, \dot{v} \right\rangle
                   - \left<\frac{\partial L }{\partial q }, \dot{q}\right>
                   - \left<\frac{\partial L }{\partial v }, \dot{v}\right>  = \left\langle
       \dot{p} - \frac{\partial L}{\partial q}, v
       \right\rangle,
\]
which vanishes since $\dot{q}=v \in \Delta (q)$ and
since $\dot{p} - {\partial L}/{\partial q} \in \Delta^{\circ} 
(q)$.

\paragraph{The Lagrange-d'Alembert-Pontryagin Principle.(\cite{YoMa2006b})}
Let us see how the Lagrange-Dirac dynamical system can be developed from the {\it Lagrange-d'Alembert-Pontryagin
principle}  for a curve $(q(t),v(t),p(t))$, $t_{1} \le t \le t_{2},$  in $TQ \oplus T^{\ast}Q$, which is given by
\begin{equation}\label{LagDAPontPrin_Force}
\begin{split}
&\delta \int_{t_1}^{t_2} \biggl[ L(q(t),v(t)) + \left<p(t),\,\dot{q}(t)-v(t)\right> \biggr]\,dt \\
&=\delta \int_{t_1}^{t_2} \biggl[ \left<p(t), \,\dot{q}(t) \right>-E(q(t), v(t), p(t)) \biggr]\,dt\\
&=0
\end{split}
\end{equation}
for chosen variations $\delta{q}(t) \in \Delta_Q(q(t))$ and with the constraint $v(t) \in  \Delta_Q(q(t))$. Keeping the endpoints of $q(t)$ fixed, we have
\begin{equation}\label{LagDAPontPrin_Force_Local}
\begin{split}
& \int_{t_1}^{t_2}\biggl[ 
\left< \frac{\partial L}{\partial q}-\dot{p}, \delta{q} \right>+ \left< \frac{\partial
L}{\partial{v}} -p,\, \delta{v} \right> + \left<\delta{p}, \, \dot{q}-v \right>
\biggr] \,dt  =0
\end{split}
\end{equation}
for variations $\delta{q}(t) \in \Delta_Q(q(t))$, for all $\delta{v}(t)$ and $\delta{p}(t)$, and with $v(t) \in \Delta_Q(q(t))$.

Thus, we obtain the local expression of the Lagrange-Dirac dynamical system in \eqref{ExtLagDAEqn_Local} from the  Lagrange-d'Alembert-Pontryagin
principle in \eqref{LagDAPontPrin_Force_Local}.

For the case in which $\Delta_{Q}=TQ$, this recovers the {\it Hamilton-Pontryagin principle}, which induces the {\it implicit Euler-Lagrange equations}.

\paragraph{Dirac Structures in Hamiltonian Systems.}
In general, the standard Dirac structure $D_{P}$ on a manifold $P$ is given by the graph of a skew-symmetric bundle map over $P$ as shown above. The Hamilton-Dirac system can be given by a pair $(D_{P},H)$ that satisfies, for each $z\in P$,
\begin{equation}\label{canonicalD}
(\dot z, \mathbf{d}H(z)) \in D_{P}(z),
\end{equation}
where $H: P \to \mathbb{R}$ denotes a Hamiltonian. 
 The idea was shown by  \cite{VaMa1995} to the case of a nontrivial distribution $\Delta_{P} \subset TP$ on a Poisson manifold $P$, which is called {\it implicit Hamiltonian systems}.

For the case $P=T^{\ast}Q$, using the usual local coordinates $z=(q,p)$ leads to the canonical Dirac structure and the standard Hamiltonian system. The standard Hamiltonian equations can be also formulated by Hamilton's phase space principle (see \cite{YoMa2006b}):
$$
\delta \int_{t_1}^{t_2}\biggl[\left< p, \dot{q}\right>-H(q,p)\biggr]\,dt=0
$$
with the fixed endpoint conditions $\delta q(t_1)= \delta q(t_2)=0$. 

Intrinsically, Hamilton's phase space principle is described by
\[
\delta \int_{t_1}^{t_2}\biggl[\bra\Theta_P(z),\,\dot z\ket-H(z)\biggr] dt=0,
\]  
where $\Theta_P$ is a one-form on $P$, which may induce under the condition of the variation of the curves fixed at the endpoints:
\[
\mathbf{i}_{\dot z}\Omega_P(z)=\de H(z),
\]
where $\Omega_P=-\de\Theta_P$. The above construction is clearly consisted with the construction using the canonical Dirac structure as in \eqref{canonicalD}.

%
%------------------------------------------------------------------------------------

\section{Lagrange-Dirac Systems in Vakonomic Mechanics}

\paragraph{The Hamilton-Pontryagin Principle for Vakonomic Lagrangians.}
Let us consider the Hamilton-Pontryagin principle for vakonomic Lagrangians. To do this, let $L: TQ \to \mathbb{R}$ be a Lagrangian, possibly degenerate, and consider the following nonholonomic constraints:
$$
\phi^{\alpha}(q,v)=0,\quad \alpha=1,...,m<n;
$$
where $(q,v)$ are the local coordinates of $TQ$.
\begin{definition}
Define a vakonomic Lagrangian $\ext:TQ\times V^{\ast}\Flder\R$ by
\begin{equation}\label{VakLag}
\ext(q,v,\lambda):=L(q,v)+\lambda_{\alpha}\,\phi^{\alpha}(q,v),
\end{equation}
where $L:TQ\Flder\R$ is the usual Lagrangian and we consider $\lambda_{\alpha}$ as the local coordinates of the dual vector space $V^*$. Define also the {\bfi vakonomic Lagrangian energy} $E_{\ext}:(TQ \oplus T^{\ast}Q)\times V^{\ast}\Flder\R$ by 
\begin{equation*}\label{vakLagEn}
E_{\ext}(q,v,p,\lambda):=\left<p,v\right>-\ext(q,v,\lambda),
\end{equation*}
where $(q,v,p,\lambda)$ are local coordinates of the {\bfi vakonomic Pontryagin bundle} $(TQ \oplus T^{\ast}Q)\times V^{\ast}$.
\end{definition}

%Note that $\ext$ is degenerate in the variables $\lambda_{\alpha}$. 
%Next we establish the Hamilton-Pontryagin principle for the vakonomic Lagrangian $\ext(q,v,\lambda)$.
\begin{proposition}\label{Lagrangian0}
Let $\ext:TQ\times V^{\ast}\Flder\R$ be a (possibly degenerate) Lagrangian function. Define the action functional
\begin{equation}\label{action1}
\begin{split}
&\int_{t_1}^{t_2}\biggl[ \ext(q(t),v(t),\lambda(t))+ \left<p(t), \dot q(t)-v(t) \right> \biggr] dt\\
&\qquad =\int_{t_1}^{t_2}\biggl[  \left<p(t), \dot q(t)\right> -E_{\ext}(q,v,p,\lambda)\biggr]\,dt.
\end{split}
\end{equation}
Keeping the endpoints of $q(t)$ fixed, whereas the endpoints of $v(t)$, $p(t)$ and $\lambda(t)$ are allowed to be free, the stationary condition for this action functional induces the {\bfi local implicit vakonomic Euler-Lagrange equations}:
%\begin{subequations}\label{variational1}
\begin{align}\label{vakoELeq}
p=\frac{\der\ext}{\der v},\quad
\dot q=v,\quad
\dot{p}=\frac{\der\ext}{\der q},\quad
0=\frac{\der\ext}{\der\lambda},
\end{align}
which are restated by
\begin{equation}\label{ImVDynamics}
\begin{split}
p=\frac{\der L}{\der v}+\lambda_{\alpha}\frac{\der\phi^{\alpha}}{\der v},\quad \dot q=v,\quad 
\dot p=\frac{\der L}{\der q}+\lambda_{\alpha}\frac{\der\phi_{\alpha}}{\der q},\quad
%+\left<\lambda, \,\frac{\der\phi}{\der q}\right>,\quad
\phi^{\alpha}(q,\dot q)=0.
\end{split}
\end{equation}
Notice that the above equations are equivalent with \eqref{VDynamics}.

%\end{subequations}
\end{proposition}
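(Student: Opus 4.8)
The plan is to apply the standard variational calculus to the action functional in \eqref{action1}, treating $(q,v,p,\lambda)$ as independent curves in $(TQ\oplus T^{\ast}Q)\times V^{\ast}$, and then to read off the stationarity conditions termwise. First I would compute the first variation of the action. Writing the integrand as $\ext(q,v,\lambda)+\langle p,\dot q-v\rangle$ and taking variations $\delta q,\delta v,\delta p,\delta\lambda$, I get
\[
\delta\!\int_{t_1}^{t_2}\!\Big[\ext(q,v,\lambda)+\langle p,\dot q-v\rangle\Big]dt
=\int_{t_1}^{t_2}\!\Big[\Big\langle\tfrac{\der\ext}{\der q},\delta q\Big\rangle+\Big\langle\tfrac{\der\ext}{\der v}-p,\delta v\Big\rangle+\Big\langle\tfrac{\der\ext}{\der\lambda},\delta\lambda\Big\rangle+\langle p,\delta\dot q\rangle+\langle\delta p,\dot q-v\rangle\Big]dt.
\]
The only term needing attention is $\langle p,\delta\dot q\rangle$: integrating by parts gives $[\langle p,\delta q\rangle]_{t_1}^{t_2}-\int\langle\dot p,\delta q\rangle\,dt$, and the boundary term vanishes because the endpoints of $q(t)$ are held fixed, i.e. $\delta q(t_1)=\delta q(t_2)=0$. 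No boundary terms arise from $v$, $p$ or $\lambda$ since these appear in the integrand without their time derivatives, which is exactly why their endpoints may be left free.

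Next I would collect coefficients. Since $\delta q,\delta v,\delta p,\delta\lambda$ are arbitrary (with $\delta q$ vanishing at the endpoints), the fundamental lemma of the calculus of variations forces each coefficient to vanish along the curve:
\begin{align*}
\tfrac{\der\ext}{\der v}-p&=0,\qquad \dot q-v=0,\qquad \tfrac{\der\ext}{\der q}-\dot p=0,\qquad \tfrac{\der\ext}{\der\lambda}=0,
\end{align*}
which is precisely \eqref{vakoELeq}. Then I would substitute the definition \eqref{VakLag}, $\ext(q,v,\lambda)=L(q,v)+\lambda_{\alpha}\phi^{\alpha}(q,v)$, and compute the partials: $\der\ext/\der v=\der L/\der v+\lambda_{\alpha}\,\der\phi^{\alpha}/\der v$, $\der\ext/\der q=\der L/\der q+\lambda_{\alpha}\,\der\phi^{\alpha}/\der q$, and $\der\ext/\der\lambda_{\alpha}=\phi^{\alpha}(q,v)$; using $\dot q=v$ in the last one yields $\phi^{\alpha}(q,\dot q)=0$. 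This gives \eqref{ImVDynamics}.

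Finally, for the equivalence with \eqref{VDynamics}, I would eliminate the auxiliary variables $p$ and $v$: the constraint $\dot q=v$ lets me replace $v$ by $\dot q$ everywhere, and the relation $p=\der L/\der v+\lambda_{\alpha}\,\der\phi^{\alpha}/\der v$ together with $\dot p=\der L/\der q+\lambda_{\alpha}\,\der\phi^{\alpha}/\der q$ gives, upon differentiating the first in $t$ and substituting,
\[
\frac{d}{dt}\Big(\frac{\der L}{\der\dot q}+\lambda_{\alpha}\frac{\der\phi^{\alpha}}{\der\dot q}\Big)=\frac{\der L}{\der q}+\lambda_{\alpha}\frac{\der\phi^{\alpha}}{\der q},\qquad \phi^{\alpha}(q,\dot q)=0,
\]
which is exactly \eqref{VDynamics}. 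The argument is essentially routine; the only subtle point — and the place I would be most careful — is the bookkeeping of boundary terms in the integration by parts, making sure that the free-endpoint conditions on $v,p,\lambda$ really do not contribute (they do not, since no $\dot v,\dot p,\dot\lambda$ appear) and that the fixed-endpoint condition on $q$ kills the one boundary term that does appear. No genuine obstacle is expected beyond this.
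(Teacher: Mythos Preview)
Your proposal is correct and follows essentially the same approach as the paper: take the first variation, integrate $\langle p,\delta\dot q\rangle$ by parts, use $\delta q(t_1)=\delta q(t_2)=0$ to kill the boundary term, and read off the four equations from the independence of the variations. Your write-up is in fact more explicit than the paper's (which simply says ``by direct computations\ldots where integration by parts has been taken into account''), particularly in justifying why the free endpoints of $v,p,\lambda$ produce no boundary contributions and in spelling out the passage from \eqref{ImVDynamics} to \eqref{VDynamics}.
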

\begin{proof}
By direct computations, the variation of the action functional \eqref{action1} is given by 
\begin{equation*}
\begin{split}
&\delta\int_{t_1}^{t_2}\biggl[\ext(q(t),v(t),\lambda(t))+ \left<p(t), \dot q(t)-v(t) \right>\biggr]\,dt=
\int_{t_1}^{t_2}\left[\left<\frac{\der\ext}{\der q}-\dot p,\,\delta\,q\right> \right. \\
&\hspace{3cm}\left.
+\left<\frac{\der\ext}{\der v}- p,\,\delta\,v\right>
+\left<\frac{\der\ext}{\der\lambda},\delta\lambda\right>+\left<\dot q-v,\,\delta\,p\right>
\right] \,dt+\left< p,\delta q\right>\bigg|_{t_1}^{t_2},
\end{split}
\end{equation*}
where integration by parts has been taken into account. Keeping the endpoints of $q(t)$ fixed, namely, $\delta q(t_1)=\delta q(t_2)=0$, the stationarity condition for the action functional with free variations $(\delta q,\delta v,\delta\lambda,\delta p)$ provides the set of equations \eqref{vakoELeq}, which lead to \eqref{ImVDynamics} straightforwardly from the definition of $\ext$.
\end{proof}

We call the above variational principle as the {\bfi Hamilton-Pontryagin principle for the vakonomic Lagrangian} $\ext(q,v,\lambda)$.

\paragraph{The Intrinsic Implicit Vakonomic Euler-Lagrange Equations.} Our next purpose is to develop an {\it intrinsic form for the vakonomic Euler-Lagrange equations}. 

Let $\Theta_{T^*Q}$ be the canonical one-form on $T^{\ast}Q$ and thus $\Omega_{T^*Q}=-\mathbf{d}\Theta_{T^*Q}$ is the canonical two-form on $T^{\ast}Q$.
Define the projections
\begin{equation*}
\begin{split}
\overline{\mathrm{pr}}_{T^{\ast}Q}&:(TQ \oplus T^{\ast}Q)\times V^{\ast} \to T^{\ast}Q;\quad (q,v,p,\lambda) \mapsto (q, p),\\
\overline{\mathrm{pr}}_{Q}&:(TQ \oplus T^{\ast}Q) \times V^{\ast} \to Q;\quad \,\,\,\,\,\,\,(q,v,p,\lambda) \mapsto q.
\end{split}
\end{equation*}
One can define a pre-symplectic form $\overline{\Omega}$ on $(TQ \oplus T^{\ast}Q)\times V^{\ast}$ by
\begin{eqnarray}\label{PreSymp}
\overline{\Omega}:=\overline{\mathrm{pr}}_{T^{\ast}Q}^{\ast}\Omega_{T^*Q}.
\end{eqnarray}
In the above, notice that $\overline{\Omega}=-\mathbf{d}\overline{\Theta}$ holds since $\overline{\Theta}:=\overline{\mathrm{pr}}_{T^{\ast}Q}^{\ast}\Theta_{T^*Q}$ is the one-form on $(TQ \oplus T^{\ast}Q)\times V^{\ast}$. Thus, it follows
$$
\overline{\Theta}(q,v,p,\lambda)=p\,dq,\qquad \overline{\Omega}(q,v,p,\lambda)=dq \wedge dp.
$$

\begin{definition}
Let $x(t)=(q(t),v(t),p(t),\lambda(t)),\; t \in [t_{1},t_{2}]$ be a curve in $(TQ \oplus T^{\ast}Q)\times V^{\ast}$.
Let us define the action functional for $x(t)$ by
\begin{equation}\label{Intrinsic}
\int_{t_1}^{t_2}\biggl[\,\bra\overline{\Theta}(x(t))\,,\, \dot{x}(t)\ket-E_{\ext}(x(t)) \biggr]\,dt,
\end{equation}
which is the intrinsic expression of \eqref{action1}.
\end{definition}
\begin{proposition}\label{larga}
Under the endpoints of $q(t)=\overline{\mathrm{pr}}_{Q}(x(t))$ fixed, the stationarity condition of the action functional \eqref{Intrinsic} singles out a critical curve $x(t)$ that satisfies the {\bfi intrinsic implicit vakonomic Euler-Lagrange equations:}
\begin{eqnarray}\label{IntImpEulLagEqnj}
\mathbf{i}_{\dot x(t)}\overline{\Omega}(x(t))=\mathbf{d}E_{\ext}(x(t)).
\end{eqnarray}
\end{proposition}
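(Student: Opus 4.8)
The plan is to compute the left-hand side of \eqref{IntImpEulLagEqnj} by taking the first variation of the action functional \eqref{Intrinsic} and recognizing the resulting integrand as a contraction with $\overline{\Omega}$. First I would write the variation of \eqref{Intrinsic} along a variation $\delta x(t)$ of the curve with $\delta q(t_1)=\delta q(t_2)=0$ (and $\delta v$, $\delta p$, $\delta\lambda$ free at the endpoints). The term $\bra\overline{\Theta}(x),\dot x\ket$ contributes, after the usual integration by parts, a boundary term plus $\int \bra (\mathbf{d}\overline{\Theta})(x)\cdot\dot x\,,\,\delta x\ket\,dt$; since $\overline{\Omega}=-\mathbf{d}\overline{\Theta}$, this is $-\int\mathbf{i}_{\dot x}\overline{\Omega}(x)(\delta x)\,dt$ plus the boundary term. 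The energy term contributes $-\int\bra\mathbf{d}E_{\ext}(x),\delta x\ket\,dt$. Hence the stationarity condition reads $\bra -\mathbf{i}_{\dot x}\overline{\Omega}(x)-\mathbf{d}E_{\ext}(x)\,,\,\delta x\ket=0$ for all admissible $\delta x$ (i.e. those with fixed endpoints on $q$), which is exactly \eqref{IntImpEulLagEqnj} up to sign.

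To make this rigorous and to see that nothing is lost, I would verify that the boundary term arising from integration by parts vanishes under the prescribed endpoint conditions: since $\overline{\Theta}=\overline{\mathrm{pr}}_{T^{\ast}Q}^{\ast}\Theta_{T^*Q}$, locally $\overline{\Theta}(x)=p\,dq$, so the boundary term is $\bra p\,\delta q\ket\big|_{t_1}^{t_2}$, which is zero precisely because $\delta q(t_1)=\delta q(t_2)=0$; the free endpoint variations $\delta v$, $\delta p$, $\delta\lambda$ do not enter the boundary term since $\overline{\Theta}$ has no $dv$, $dp$ or $d\lambda$ component. Then I would check, using the local expressions $\overline{\Omega}(x)=dq\wedge dp$ and $E_{\ext}(q,v,p,\lambda)=\bra p,v\ket-\ext(q,v,\lambda)$, that the intrinsic equation \eqref{IntImpEulLagEqnj} reproduces exactly the four local equations \eqref{vakoELeq}: contracting $dq\wedge dp$ with $\dot x=(\dot q,\dot v,\dot p,\dot\lambda)$ gives $\dot q\,dp-\dot p\,dq$, while $\mathbf{d}E_{\ext}=-\tfrac{\der\ext}{\der q}dq+(p-\tfrac{\der\ext}{\der v})dv+(v)dp-\tfrac{\der\ext}{\der\lambda}d\lambda$ wait — more carefully, $\mathbf{d}E_{\ext}=(-\tfrac{\der\ext}{\der q})\,dq+(p-\tfrac{\der\ext}{\der v})\,dv+v\,dp-\tfrac{\der\ext}{\der\lambda}\,d\lambda$, and matching the $dq$, $dv$, $dp$, $d\lambda$ components forces $\dot p=\tfrac{\der\ext}{\der q}$, $p=\tfrac{\der\ext}{\der v}$, $\dot q=v$, $0=\tfrac{\der\ext}{\der\lambda}$, as desired. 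This also confirms consistency with Proposition \ref{Lagrangian0}.

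The one genuinely delicate point — and the step I expect to be the main obstacle — is the handling of the \emph{free} endpoint variations. Because $\overline{\Omega}$ is only \emph{pre}-symplectic (it is degenerate in the $v$ and $\lambda$ directions), one must be sure that demanding the variational equation hold for all $\delta x$ with only $\delta q$ fixed at the ends genuinely yields the pointwise identity \eqref{IntImpEulLagEqnj} with no leftover boundary contributions in the $v$, $p$, $\lambda$ slots; the check above shows $\overline{\Theta}$ is "horizontal" enough that this works, but it is worth stating explicitly. A secondary subtlety is that the curve $x(t)$ lives in $(TQ\oplus T^{\ast}Q)\times V^{\ast}$, which is not itself symplectic, so \eqref{IntImpEulLagEqnj} should be read as an equation in $T^{\ast}_{x(t)}\big((TQ\oplus T^{\ast}Q)\times V^{\ast}\big)$; I would note that $\mathbf{d}E_{\ext}$ necessarily lands in the range of $\mathbf{i}_{\dot x}\overline{\Omega}$ along solutions, which is precisely the content of the local equations and the reason the pre-symplectic formulation is still well posed. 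Modulo these remarks, the proof is the standard computation of the first variation together with the local identification just described.
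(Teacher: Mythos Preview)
Your approach is essentially identical to the paper's: compute the first variation of \eqref{Intrinsic}, integrate by parts to produce the boundary term $\bra\overline{\Theta}(x),\delta x\ket\big|_{t_1}^{t_2}$ and the interior integrand $\bra\mathbf{i}_{\dot x}\overline{\Omega}(x)-\mathbf{d}E_{\ext}(x),\delta x\ket$, observe the boundary term vanishes because $\overline{\Theta}=p\,dq$ pairs only with $\delta q$, and then verify the intrinsic equation reproduces \eqref{vakoELeq} in coordinates. Your discussion of why the free endpoint variations in $v,p,\lambda$ cause no trouble is in fact more explicit than the paper's, which simply writes down the variational identity and reads off the result.

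One small correction: in your intrinsic computation you obtain $\bra -\mathbf{i}_{\dot x}\overline{\Omega}-\mathbf{d}E_{\ext},\delta x\ket=0$, but the correct sign from the identity $\delta\!\int\bra\overline{\Theta},\dot x\ket\,dt=\int\bra\mathbf{i}_{\dot x}\overline{\Omega},\delta x\ket\,dt+\bra\overline{\Theta},\delta x\ket\big|_{t_1}^{t_2}$ (using $\overline{\Omega}=-\mathbf{d}\overline{\Theta}$ and the standard Cartan-type manipulation) gives $\bra\mathbf{i}_{\dot x}\overline{\Omega}-\mathbf{d}E_{\ext},\delta x\ket=0$, i.e.\ exactly \eqref{IntImpEulLagEqnj} without any residual sign. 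Your local check already confirms this, so simply drop the ``up to sign'' caveat and fix the sign in the intrinsic step.
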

\begin{proof}
The stationarity condition of the action functional is given by
\begin{eqnarray*}\label{ActFunVakoLag}
&&\delta \int_{t_1}^{t_2}\biggl[\bra\overline{\Theta}(x(t)),\, \dot{x}(t)\ket-E_{\ext}(x(t))\biggr] \,dt\nonumber\\
&&\;\;=\int_{t_1}^{t_2}\left< \mathbf{i}_{\dot x(t)}\overline{\Omega}(x(t))-\mathbf{d}E_{\ext}(x(t)), \delta x(t) \right>\,dt+ \bra\overline{\Theta}(x(t))\,,\, \delta{x}(t)\ket\bigg|_{t_1}^{t_2}\\[2mm]
&&\;\;=0, \nonumber
\end{eqnarray*}
for all variations $\delta x(t)=(\delta q(t),\delta v(t),\delta p(t),\delta \lambda(t))$ with the endpoints of $q(t)=\overline{\mathrm{pr}}_{Q}(x(t))$ fixed.
Thus, one obtains equation \eqref{IntImpEulLagEqnj}
\medskip

In fact, the left-hand side of \eqref{IntImpEulLagEqnj} is locally given by
\begin{eqnarray*}
\mathbf{i}_{\dot x(t)}\overline{\Omega}(x(t))=\left( -\dot{p} \right)dq+\dot{q}\,dp
\end{eqnarray*}
and the right-hand side is denoted by
\begin{eqnarray*}
\de E_{\ext}(q,v,p,\lambda)&=&\left<\frac{\der E_{\ext}}{\der q},\,dq \right>+\left<\frac{\der E_{\ext}}{\der v},\,dv\right>+ \left<dp, \frac{\der E_{\ext}}{\der p}\right>+\left<d\lambda,\, \frac{\der E_{\ext}}{\der \lambda} \right> \nonumber\\
&=&\left<-\frac{\der\ext}{\der q},\,dq\right>+\left< p-\frac{\der\ext}{\der v},\,dv \right>+\left<dp,\,v \right>+\left<d\lambda,\,\frac{\der\ext}{\der \lambda}\right>. \label{dE}
\end{eqnarray*}
Thus, the equation \eqref{IntImpEulLagEqnj} leads to the local expression of the implicit vakonomic Euler-Lagrange equations given in \eqref{vakoELeq}.
\end{proof}

We call the above variational principle the {\bfi Hamilton-Pontryagin principle for the vakonomic Lagrangian}. 

\paragraph{The Lagrange-Dirac Dynamical System on $(TQ \oplus T^{\ast}Q) \times V^{\ast}$.} 
Recall that we can naturally define a presymplectic form $\overline{\Omega}$ on $(TQ \oplus T^{\ast}Q) \times V^{\ast}$ as in \eqref{PreSymp}. Then, we can also define the associated bundle map 
$$
\overline{\Omega}^{\flat}: T\left((TQ \oplus T^{\ast}Q) \times V^{\ast}\right)\to T^{\ast}\left((TQ \oplus T^{\ast}Q) \times V^{\ast} \right)
$$
by, for $x \in (TQ \oplus T^{\ast}Q) \times V^{\ast}$,
$$
\overline{\Omega}^{\flat}(x)\cdot \dot{x}=\mathbf{i}_{\dot x(t)}\overline{\Omega}(x).
$$

\begin{definition}
Define the Dirac structure on $(TQ \oplus T^{\ast}Q) \times V^{\ast}$ by using the pre-symplectic two-form \eqref{PreSymp} as
$$
\overline{D}=\mathrm{graph}\,\overline{\Omega}^{\flat} \subset T\left((TQ \oplus T^{\ast}Q) \times V^{\ast}\right) \oplus T^{\ast}\left((TQ \oplus T^{\ast}Q) \times V^{\ast} \right).
$$
\end{definition}
\begin{proposition}\label{larga}
The equations of motion of the {\rm vakonomic Lagrange-Dirac dynamical system} $(\overline{D},E_{\ext})$ are given by, for each $x \in (TQ \oplus T^{\ast}Q) \times V^{\ast}$,
\begin{eqnarray}\label{VakoLagDirac}
\left( \dot{x}, \mathbf{d}E_{\mathfrak{L}}(x)\right) \in \overline{D}(x).
\end{eqnarray}
Using local coordinates $x=(q,v,p,\lambda) \in (TQ \oplus T^{\ast}Q) \times V^{\ast}$, equation \eqref{VakoLagDirac} induces the local implicit vakonomic Euler-Lagrange
equations in \eqref{vakoELeq}.

\end{proposition}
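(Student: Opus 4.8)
The plan is to unwind the definition of the graph Dirac structure and reduce the membership condition \eqref{VakoLagDirac} to the intrinsic equation already analyzed just above. Since $\overline{D}=\mathrm{graph}\,\overline{\Omega}^{\flat}$, a pair $(\dot{x},\alpha_x)$ in $T_x\big((TQ\oplus T^{\ast}Q)\times V^{\ast}\big)\oplus T^{\ast}_x\big((TQ\oplus T^{\ast}Q)\times V^{\ast}\big)$ belongs to $\overline{D}(x)$ if and only if $\alpha_x=\overline{\Omega}^{\flat}(x)\cdot\dot{x}=\mathbf{i}_{\dot x}\overline{\Omega}(x)$. Specializing $\alpha_x=\mathbf{d}E_{\ext}(x)$, the condition \eqref{VakoLagDirac} is therefore equivalent to $\mathbf{i}_{\dot x}\overline{\Omega}(x)=\mathbf{d}E_{\ext}(x)$, which is precisely the intrinsic implicit vakonomic Euler-Lagrange equation \eqref{IntImpEulLagEqnj} established in the preceding proposition.

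Next I would transcribe the local computation already carried out there. With coordinates $x=(q,v,p,\lambda)$ and $\overline{\Theta}=p\,dq$, $\overline{\Omega}=dq\wedge dp$ from \eqref{PreSymp}, one has $\mathbf{i}_{\dot x}\overline{\Omega}(x)=-\dot{p}\,dq+\dot{q}\,dp$, while $\mathbf{d}E_{\ext}(q,v,p,\lambda)=-\frac{\der\ext}{\der q}\,dq+\big(p-\frac{\der\ext}{\der v}\big)dv+v\,dp+\frac{\der\ext}{\der\lambda}\,d\lambda$. Matching the coefficients of the independent one-forms $dq$, $dv$, $dp$, $d\lambda$ yields $\dot{p}=\der\ext/\der q$, $p=\der\ext/\der v$, $\dot{q}=v$ and $0=\der\ext/\der\lambda$, i.e. exactly the local implicit vakonomic Euler-Lagrange equations \eqref{vakoELeq}, which become \eqref{ImVDynamics} after inserting the definition \eqref{VakLag} of $\ext$.

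The only point deserving a word of care is that $\overline{\Omega}$ is genuinely pre-symplectic, so $\overline{\Omega}^{\flat}$ is not an isomorphism and the membership $(\dot x,\mathbf{d}E_{\ext}(x))\in\overline{D}(x)$ is not automatically solvable for $\dot x$; rather, it forces $\mathbf{d}E_{\ext}(x)$ to lie in the range of $\overline{\Omega}^{\flat}(x)$. I would observe that this range restriction is precisely what produces the algebraic (primary-constraint) relations $p=\der\ext/\der v$ and $0=\der\ext/\der\lambda$ among the four equations, since those are read off from the $dv$ and $d\lambda$ components, which carry no $\dot x$-dependence; hence no separate compatibility argument is needed. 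Beyond this remark there is no real obstacle — the statement just asserts that the graph-Dirac formalism on $(TQ\oplus T^{\ast}Q)\times V^{\ast}$ reproduces componentwise the variational equations of Proposition \ref{Lagrangian0} and of the preceding proposition, so the proof is essentially the bookkeeping of coefficients already performed in the intrinsic computation.
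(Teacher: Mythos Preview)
Your proof is correct and follows essentially the same route as the paper: both arguments unfold the definition $\overline{D}=\mathrm{graph}\,\overline{\Omega}^{\flat}$, compute $\mathbf{i}_{\dot x}\overline{\Omega}=-\dot p\,dq+\dot q\,dp$ from $\overline{\Omega}=dq\wedge dp$, and match against the components of $\mathbf{d}E_{\ext}$ to read off \eqref{vakoELeq}. Your explicit reduction to the intrinsic equation \eqref{IntImpEulLagEqnj} and your remark on how the pre-symplectic degeneracy produces the algebraic relations are nice clarifications, but the core computation is the same as the paper's; note only that the $d\lambda$-component of $\mathbf{d}E_{\ext}$ carries a minus sign (since $E_{\ext}=\langle p,v\rangle-\ext$), though this is immaterial because it is equated to zero.
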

\begin{proof}
The Dirac structure $\overline{D}$ is locally denoted, for each $x=(q,v,p,\lambda) \in (TQ \oplus T^{\ast}Q) \times V^{\ast}$, by
\begin{align}\label{local-overlinedirac}
\overline{D}(x)  & = \biggl\{\left((\dot{q},\dot{v},\dot{p},\dot{\lambda}),(\alpha,\beta,w,u) \right) 
\mid  
w = \dot{q},  \; \beta=0, \; \alpha +\dot{p}=0,\; \mbox{and},\; u=0 \biggr\},
\end{align}
where $(\alpha,\beta,w,u)\in T^*_x((TQ \oplus T^{\ast}Q) \times V^{\ast})$. The particular local form of \eqref{local-overlinedirac} follows directly from $\overline{\Omega}=dq\wedge dp$, more concretely $\mathbf{i}_{\dot x}\overline{\Omega}=(-\dot p,0,\dot q,0)$. Consequently, it follows from \eqref{dE} and \eqref{local-overlinedirac} that one can obtain the implicit vakonomic Euler-Lagrange
equations in \eqref{vakoELeq} when setting $(\alpha,\beta,w,u)=\de E_{\mathfrak{L}}(q,v,p,\lambda)$, namely
\[
\de E_{\mathfrak{L}}(q,v,p,\lambda)=\lp\frac{\der E_{\mathfrak{L}}}{\der q}, \frac{\der E_{\mathfrak{L}}}{\der v},\frac{\der E_{\mathfrak{L}}}{\der p}, \frac{\der E_{\mathfrak{L}}}{\der\lambda}\rp.
\]
\end{proof}
\paragraph{The Vakonomic Dirac Differential Operator.} In the previous paragraph we have defined the vakonomic Lagrange-Dirac dynamical system in terms of the pair $(\overline{D}, E_{\ext})$. Here, by analogy with the construction of implicit Lagrangian systems, we shall define an alternative notion of {\it vakonomic Lagrange-Dirac system}. To do this, we introduce the following isomorphisms:
$$
\iota_{1}: T^{\ast}(TQ \times V^{\ast}) \to T^{\ast}TQ \times T^{\ast}V^{\ast}; \;\; (q,\delta{q},\lambda,\delta{p},p,w) \mapsto (q,\delta{q},\delta{p},p,\lambda,w),
$$
and
$$ 
\iota_{2}:T^{\ast}(T^{\ast}Q \times V^{\ast}) \to T^{\ast}T^{\ast}Q \times T^{\ast}V^{\ast}; \;\; (q,p,\lambda,-\delta{p},\delta{q},w) \mapsto (q,p,-\delta{p},\delta{q},\lambda,w),
$$ 
where $(\lambda, w)\in T^*V^*$. Then, we define a diffeomorphism  $\tilde{\gamma}_Q$ between $T^{\ast}(T^{\ast}Q \times V^{\ast})$ and $T^{\ast}(TQ \times V^{\ast})$ as
\begin{equation}\label{tildegamma}
\begin{split}
\tilde{\gamma}_Q:= \iota_{2}^{-1} \circ ({\gamma}_Q \times \mathrm{Id}) \circ \iota_{1} &: T^{\ast}(TQ \times V^{\ast}) \to T^{\ast}(T^*Q \times V^{\ast});\\[1mm]
& (q,\delta{q},\lambda,\delta{p},p,w) \mapsto (q,p,\lambda,-\delta{p},\delta{q},w),
\end{split}
\end{equation}
where $\mathrm{Id}: T^{\ast}V^{\ast} \to T^{\ast}V^{\ast}$ is the identity map.

Given a vakonomic Lagrangian $\mathfrak{L}(q,v,\lambda)$ on  $TQ\times V^{\ast}$, its differential  is a one-form $\de\ext:TQ\times V^{\ast}\Flder T^{\ast}(TQ\times V^{\ast})$, which may be locally described by
\[
\de\ext(q,v,\lambda)=\lp q,v,\lambda,\frac{\der\ext}{\der q},\frac{\der\ext}{\der v},\frac{\der\ext}{\der\lambda}\rp.
\]
Using the diffeomorphism $\tilde\gamma_Q$ in \eqref{tildegamma}, we can define a differential operator $\Dirac$ called the {\it vakonomic Dirac Differential} of $\ext$ by
\begin{equation}
\Dirac\ext:=\tilde\gamma_Q\circ\de\,\ext.
\end{equation}
Namely, the map $\Dirac\ext:TQ\times V^*\Flder T^*(T^*Q\times V^*)$  is locally given by
\begin{equation}\label{DiracLocal}
\Dirac\ext(q,v,\lambda)=\lp q,\frac{\der\ext}{\der v},\lambda,-\frac{\der\ext}{\der q},v,\frac{\der\ext}{\der \lambda}\rp.
\end{equation}

\paragraph{Pre-symplectic Form on $T^{\ast}Q \times V^{\ast}$}
Using the natural projection
$$
\widehat{\mathrm{pr}}_{T^{\ast}Q}:T^{\ast}Q \times V^{\ast} \to T^{\ast}Q;\quad (q,p,\lambda) \mapsto (q,p),
$$
we can define a pre-symplectic structure $\widehat{\Omega}$ on $T^{\ast}Q \times V^{\ast}$ by
\begin{equation}\label{pre-symp}
\widehat{\Omega}:=\widehat{\mathrm{pr}}_{T^{\ast}Q}^{\ast}\Omega_{T^*Q},
\end{equation}
which is given in local form, for each $(q,p,\lambda) \in T^{\ast}Q \times V^{\ast}$, by
$$
\widehat{\Omega}(q,p,\lambda)\left((\dot{q},\dot{p},\dot{\lambda}),(\delta{q},\delta{p},\delta{\lambda})\right)=\left<\delta{p}, \dot{q} \right>-
\left<\dot{p}, \delta{q} \right>.
$$
Associated with $\widehat{\Omega}$, we have the bundle map
$$
\widehat{\Omega}^{\flat}: T(T^{\ast}Q \times V^{\ast}) \to T^{\ast}(T^{\ast}Q \times V^{\ast}),
$$
which is locally denoted by
$$
(q,p,\lambda,\delta{q},\delta{p},\delta{\lambda}) \mapsto (q,p,\lambda,-\delta{p},\delta{q},0).
$$

\paragraph{Dirac Structure on $T^{\ast}Q\times  V^{\ast}$.} Before constructing the Lagrange-Dirac system for the vakonomic mechanics, we shall define a Dirac structure on $T^{\ast}Q\times V^{\ast}$ as in the below.

\begin{definition}
Define a Dirac structure $\widehat{D}$ on $T^{\ast}Q\times  V^{\ast}$ by using the pre-symplectic form $\widehat{\Omega}$ in \eqref{pre-symp} as 
$$
\widehat{D}=\mathrm{graph}\,\widehat{\Omega}^{\flat} \subset T\left(T^{\ast}Q \times V^{\ast}\right) \oplus T^{\ast}\left(T^{\ast}Q \times V^{\ast} \right).
$$
\end{definition}
\begin{proposition}
The Dirac structure $\widehat{D}$ on $T^{\ast}Q\times  V^{\ast}$ is locally denoted, for each $(q,p,\lambda) \in T^{\ast}Q \times V^{\ast}$, by
\begin{align}\label{local-widehatdirac}
\widehat{D}(q,p,\lambda) & = \biggl\{\left((\dot{q},\dot{p},\dot{\lambda}),(\alpha,u,w) \right) 
\mid  
u= \dot{q},  \;  \alpha +\dot{p}=0,\; \mbox{and},\; w=0 \biggr\},
\end{align}
where we denote $(\alpha,u,w)\in T^*(TQ\times V^*).$
\end{proposition}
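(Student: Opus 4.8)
The plan is to compute the bundle map $\widehat{\Omega}^{\flat}$ explicitly in local coordinates and then read off its graph. Since the Dirac structure is defined as $\widehat{D}=\mathrm{graph}\,\widehat{\Omega}^{\flat}$, a point $\left((\dot q,\dot p,\dot\lambda),(\alpha,u,w)\right)\in T(T^{\ast}Q\times V^{\ast})\oplus T^{\ast}(T^{\ast}Q\times V^{\ast})$ lies in $\widehat{D}(q,p,\lambda)$ precisely when $(\alpha,u,w)=\widehat{\Omega}^{\flat}(q,p,\lambda)\cdot(\dot q,\dot p,\dot\lambda)$. So the whole content of the proposition is to verify the local formula for $\widehat{\Omega}^{\flat}$ already displayed just above the statement, namely $(q,p,\lambda,\dot q,\dot p,\dot\lambda)\mapsto(q,p,\lambda,-\dot p,\dot q,0)$, and then to match the coordinate conventions so that the covector $(\alpha,u,w)$ has components $\alpha$ (dual to $\dot q$), $u$ (dual to $\dot p$), $w$ (dual to $\dot\lambda$); here I am using the identification, implicit in the paper's notation for $T^{\ast}(TQ\times V^{\ast})$ and reused for $T^{\ast}(T^{\ast}Q\times V^{\ast})$, under which a covector on $T^{\ast}Q\times V^{\ast}$ is written with a $TQ\times V^{\ast}$-type index pattern.

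First I would recall from \eqref{pre-symp} that $\widehat{\Omega}=\widehat{\mathrm{pr}}_{T^{\ast}Q}^{\ast}\Omega_{T^*Q}$, so that in coordinates $\widehat{\Omega}(q,p,\lambda)=dq\wedge dp$ with no $d\lambda$ contribution. Then for a tangent vector $\dot x=(\dot q,\dot p,\dot\lambda)$ I would compute $\mathbf{i}_{\dot x}\widehat{\Omega}$: contracting $dq\wedge dp$ with $\dot x$ gives $\dot q\,dp-\dot p\,dq$, and there is no $d\lambda$ component, i.e. the $\dot\lambda$-dual slot is zero. This is exactly the stated local expression $(q,p,\lambda,\delta q,\delta p,\delta\lambda)\mapsto(q,p,\lambda,-\delta p,\delta q,0)$ for $\widehat{\Omega}^{\flat}$. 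Taking the graph then forces $\alpha=-\dot p$ (the $dq$-component), $u=\dot q$ (the $dp$-component), and $w=0$ (the $d\lambda$-component), which is precisely \eqref{local-widehatdirac}. I would also remark, as the paper does for $\overline{D}$ in Proposition \ref{larga}, that $\widehat{D}$ is indeed a Dirac structure in the sense of the Remark following Proposition \ref{DProof}, being the graph of the skew-symmetric bundle map $\widehat{\Omega}^{\flat}$ associated with the pre-symplectic form $\widehat{\Omega}$ (here the distribution is the whole tangent bundle), so the claim that it is a Dirac structure is already justified.

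The only genuinely delicate point — not a computation so much as a bookkeeping issue — is keeping the coordinate labels on $T^{\ast}(T^{\ast}Q\times V^{\ast})$ consistent with those used in the definition of $\iota_{2}$ and $\tilde\gamma_Q$ above, where a point of $T^{\ast}(T^{\ast}Q\times V^{\ast})$ is written $(q,p,\lambda,-\delta p,\delta q,w)$; matching that ordering against the $(\alpha,u,w)\in T^{\ast}(TQ\times V^{\ast})$ notation used in the statement requires care, but once the identification is fixed the verification is immediate. So the main obstacle is essentially notational coherence rather than any real mathematical difficulty; the proof is a short direct computation.

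\begin{proof}
By \eqref{pre-symp}, $\widehat{\Omega}=\widehat{\mathrm{pr}}_{T^{\ast}Q}^{\ast}\Omega_{T^*Q}$, so in the local coordinates $(q,p,\lambda)$ on $T^{\ast}Q\times V^{\ast}$ one has $\widehat{\Omega}(q,p,\lambda)=dq\wedge dp$, with no $d\lambda$ contribution. Hence for a tangent vector $\dot{x}=(\dot{q},\dot{p},\dot{\lambda})\in T_{(q,p,\lambda)}(T^{\ast}Q\times V^{\ast})$,
\[
\mathbf{i}_{\dot{x}}\widehat{\Omega}(q,p,\lambda)=\dot{q}\,dp-\dot{p}\,dq,
\]
which shows that the bundle map $\widehat{\Omega}^{\flat}:T(T^{\ast}Q\times V^{\ast})\to T^{\ast}(T^{\ast}Q\times V^{\ast})$ is given locally by
\[
(q,p,\lambda,\dot{q},\dot{p},\dot{\lambda})\mapsto (q,p,\lambda,-\dot{p},\dot{q},0),
\]
as claimed in the preceding paragraph; in particular the component dual to $\dot{\lambda}$ vanishes. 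Since $\widehat{D}=\mathrm{graph}\,\widehat{\Omega}^{\flat}$, a pair $\left((\dot{q},\dot{p},\dot{\lambda}),(\alpha,u,w)\right)$ belongs to $\widehat{D}(q,p,\lambda)$ if and only if $(\alpha,u,w)=\widehat{\Omega}^{\flat}(q,p,\lambda)\cdot(\dot{q},\dot{p},\dot{\lambda})$, that is, if and only if the $dq$-component satisfies $\alpha=-\dot{p}$, the $dp$-component satisfies $u=\dot{q}$, and the $d\lambda$-component satisfies $w=0$. This is precisely \eqref{local-widehatdirac}. Finally, $\widehat{\Omega}^{\flat}$ is skew-symmetric since $\widehat{\Omega}$ is a (pre-symplectic) two-form, so by the Remark following Proposition \ref{DProof} its graph $\widehat{D}$ is indeed a Dirac structure on $T^{\ast}Q\times V^{\ast}$.
\end{proof}
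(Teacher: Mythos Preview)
Your proof is correct and follows exactly the same approach as the paper: both compute $\widehat{\Omega}=dq\wedge dp$ from \eqref{pre-symp}, contract with $(\dot q,\dot p,\dot\lambda)$ to obtain $(-\dot p,\dot q,0)$, and read off the graph condition. Your version is simply more detailed, adding the explicit interior-product computation and the remark that the graph of $\widehat{\Omega}^{\flat}$ is a Dirac structure, whereas the paper's proof compresses this into two lines.
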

\begin{proof}
Again, the claim follows from the particular local form of $\widehat{\Omega}$ given by \eqref{pre-symp}, namely $\widehat{\Omega}=dq\we dp$. Therefore, if 
$\lp(\dot{q},\dot{p},\dot{\lambda}),(\alpha,u,w)\rp\in\widehat{D}(q,p,\lambda)$ it follows that
\[
\widehat{\Omega}\lp(\dot{q},\dot{p},\dot{\lambda}),\cdot\rp=(-\dot p,\dot q,0)=(\alpha,u,w),
\]
which finishes the proof.
\end{proof}

\paragraph{The Vakonomic Lagrange-Dirac Systems on $T^*Q\times V^*$.}
We give the definition of {\it vakonomic Lagrange-Dirac systems on $T^{\ast}Q\times  V^{\ast}$} as follows:
\begin{definition}
Let $\ext:TQ\times V^{\ast}\Flder\R$ be a given Lagrangian function (possibly degenerate). Let $(q(t),v(t),p(t),\lambda(t)),\, t\in [t_{1},t_{2}]$, be a curve in $(TQ \oplus T^{\ast}Q) \times  V^{\ast}$. The equations of motion for the {\bfi vakonomic Lagrange-Dirac system}  $(\widehat{D},\mathfrak{L})$ are given by
\begin{equation}\label{ImpVakLagSys2}
((\dot{q}(t),\dot{p}(t),\dot{\lambda}(t)),\mathbf{d}_{D}\mathfrak{L}(q(t),v(t),\lambda(t)))\in \widehat{D}(q(t),p(t),\lambda(t)).
\end{equation}
\end{definition}
\begin{proposition}
The curve $(q(t),v(t),p(t),\lambda(t))$ is a solution curve of the vakonomic Lagrange-Dirac system on $T^{\ast}Q\times  V^{\ast}$ in \eqref{ImpVakLagSys2}  if and only if it verifies 
%:
\begin{equation}\label{ImpVakEul-Lag2}
\mathbf{i}_{(\dot{q}(t),\dot{p}(t),\dot{\lambda}(t))}\widehat\Omega(q(t),p(t),\lambda(t)) =\de_{D}\mathfrak{L}(q(t),v(t),\lambda(t))),
\end{equation}
which are locally denoted by  equations in \eqref{vakoELeq}. 
\end{proposition}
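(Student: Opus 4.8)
The plan is to unwind the membership \eqref{ImpVakLagSys2} using the fact that $\widehat D$ is, by definition, the graph of the bundle map $\widehat\Omega^{\flat}$, and then to read off the local equations \eqref{vakoELeq} by comparing components slot by slot. Since $\widehat D=\mathrm{graph}\,\widehat\Omega^{\flat}$, the inclusion \eqref{ImpVakLagSys2} holds if and only if
\[
\widehat\Omega^{\flat}(q(t),p(t),\lambda(t))\cdot(\dot q(t),\dot p(t),\dot\lambda(t))=\Dirac\mathfrak{L}(q(t),v(t),\lambda(t)),
\]
and because $\widehat\Omega^{\flat}(x)\cdot\dot x=\mathbf{i}_{\dot x}\widehat\Omega(x)$ by construction, this displayed identity is literally \eqref{ImpVakEul-Lag2}. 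Thus the equivalence between \eqref{ImpVakLagSys2} and \eqref{ImpVakEul-Lag2} is immediate from the notion of a graph, with no coordinates involved.

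Next I would extract the local content. A first observation --- the exact analogue of the base-point remark made just after the definition of the Lagrange-Dirac dynamical system --- is that both sides of the displayed identity are covectors in $T^{\ast}(T^{\ast}Q\times V^{\ast})$ and hence must share the same base point in $T^{\ast}Q\times V^{\ast}$: by \eqref{DiracLocal} the base point of $\Dirac\mathfrak{L}(q,v,\lambda)$ is $(q,\partial\mathfrak{L}/\partial v,\lambda)$, whereas the left-hand side is based at $(q,p,\lambda)$, so their coincidence yields $p=\partial\mathfrak{L}/\partial v$. For the remaining (fibre) components I would invoke the local description \eqref{local-widehatdirac} of $\widehat D$ together with the local form \eqref{DiracLocal} of $\Dirac\mathfrak{L}$: the fibre part of $\Dirac\mathfrak{L}$, written in the coordinates $(\alpha,u,w)$ used in \eqref{local-widehatdirac}, is $(\alpha,u,w)=(-\partial\mathfrak{L}/\partial q,\,v,\,\partial\mathfrak{L}/\partial\lambda)$, so the three defining relations $u=\dot q$, $\alpha+\dot p=0$ and $w=0$ of \eqref{local-widehatdirac} become $\dot q=v$, $\dot p=\partial\mathfrak{L}/\partial q$ and $0=\partial\mathfrak{L}/\partial\lambda$. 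Gathered with $p=\partial\mathfrak{L}/\partial v$, these are exactly the local implicit vakonomic Euler-Lagrange equations \eqref{vakoELeq}; the converse implication is then clear, since any curve satisfying \eqref{vakoELeq} visibly meets the conditions in \eqref{local-widehatdirac}, hence \eqref{ImpVakLagSys2}.

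The only step that demands genuine care --- and the one I would write out explicitly rather than dismiss as routine --- is keeping track of the identifications $\iota_1$, $\iota_2$ and $\tilde\gamma_Q$ from \eqref{tildegamma}: it is precisely these maps that guarantee that the coordinate slots in which \eqref{DiracLocal} records $\Dirac\mathfrak{L}$ match, one for one, the slots $(\alpha,u,w)$ appearing in the local expression \eqref{local-widehatdirac} of $\widehat D$. Once that matching is laid out transparently, the proof reduces to a direct comparison of components, and no real obstacle remains, since $\widehat\Omega$ is pre-symplectic (so $\widehat\Omega^{\flat}$ is fibrewise linear) and the local forms of $\widehat\Omega^{\flat}$ and of $\Dirac\mathfrak{L}$ have already been computed above.
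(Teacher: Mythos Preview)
Your proposal is correct and follows essentially the same route as the paper: both unwind \eqref{ImpVakLagSys2} via the definition $\widehat D=\mathrm{graph}\,\widehat\Omega^{\flat}$ to obtain \eqref{ImpVakEul-Lag2}, then match base points to get $p=\partial\mathfrak{L}/\partial v$ and compare fibre components to recover the remaining equations in \eqref{vakoELeq}. Your write-up is in fact slightly more careful than the paper's --- you make the converse direction explicit and flag the bookkeeping with $\iota_1,\iota_2,\tilde\gamma_Q$ --- but the underlying argument is the same.
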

\begin{proof}
By definition, we have
$$
\widehat{D}=\mathrm{graph}\,\widehat{\Omega}^{\flat}
$$
and hence it follows from \eqref{ImpVakLagSys2} that, for the solution curve $(q(t),v(t),p(t),\lambda(t))$,
$$
\widehat{\Omega}^{\flat}(q,p,\lambda)\left( \dot{q},\dot{p},\dot{\lambda}\right)=\mathbf{d}_{D}\mathfrak{L}(q,v,\lambda).
$$
Since $\widehat{\Omega}^{\flat}(q,p,\lambda)(\dot{q},\dot{p},\dot{\lambda}) := \mathbf{i}_{(\dot{q},\dot{p},\dot{\lambda})}\widehat\Omega(q,p,\lambda)$ on the left-hand side of the above equation, we obtain the  implicit vakonomic Euler-Lagrange equations \eqref{ImpVakEul-Lag2}.
\medskip

Locally, we have 
$$
\widehat{\Omega}^{\flat}(q,p,\lambda)\lp\dot{q},\dot{p},\dot{\lambda}\rp=(-\dot p,\dot q,0), 
$$
while we recall that 
$$
\mathbf{d}_{D}\mathfrak{L}(q,v,\lambda)=\left(-\frac{\der\ext}{\der q},v,\frac{\der\ext}{\der \lambda}\right), 
$$
such that the base point holds:
$$
(q,p)=\left(q, \frac{\der\ext}{\der v} \right).
$$
Therefore, we arrive to 
\[
p=\frac{\der\mathfrak{L}}{\der v},\,\,\,\,\,\,\,\dot p=\frac{\der\mathfrak{L}}{\der q},\,\,\,\,\,\,\,\dot q=v,\,\,\,\,\,\,\,\,\frac{\der\mathfrak{L}}{\der\lambda}=0,
\]
which are equations \eqref{vakoELeq}.
\end{proof}
Needless to say, equations \eqref{ImpVakEul-Lag2} are equal to the {\bfi implicit vakonomic Euler-Lagrange equations} in \eqref{IntImpEulLagEqnj}.
\medskip

Now we have described the vakonomic dynamics from several points of view; namely, the Hamilton-Pontryagin variational principle as well as the intrinsic Lagrange-Dirac dynamical systems using Dirac structures. Our results may be summarized in the following theorem:
\begin{theorem}\label{TEO}
%Let $\ext:TQ\times V^{\ast}\Flder\R$ a Lagrangian (possibly degenerate) and denote by $(q(t),v(t),p(t),\lambda(t)),\, t\in [t_{1},t_{2}]$, a curve in $(TQ \oplus T^{\ast}Q) \times  V^{\ast}$. 
The following statements are equivalent:
\begin{enumerate}
\item The Hamilton-Pontryagin principle for the following action integral
\[
\int_{t_1}^{t_2}\lc\ext(q(t),v(t),\lambda(t))+ \left<p(t), \dot q(t)-v(t) \right>\rc\,dt
\]
holds for any variations of $q(t)$ with fixed endpoints.
\item The curve $(q(t),v(t),p(t),\lambda(t))\in (TQ\oplus T^*Q)\times V^*,\, t \in[t_{1},t_{2}]$, satisfies the implicit vakonomic Euler-Lagrange equations
\begin{eqnarray*}\label{IntImpEulLagEqn}
\mathbf{i}_{\dot x(t)}\overline{\Omega}(x(t))=\mathbf{d}E_{\ext}(x(t)),
\end{eqnarray*}
which are locally given by
$$
p=\frac{\der\ext}{\der v},\quad
\dot q=v,\quad
\dot{p}=\frac{\der\ext}{\der q},\quad
0=\frac{\der\ext}{\der\lambda}.
$$

\item The curve $(q(t),v(t),p(t),\lambda(t)),\, t \in[t_{1},t_{2}]$ is a solution curve of the vakonomic Lagrange-Dirac dynamical system $(\overline{D},E_{\ext})$ which satisfies
\begin{eqnarray*}
\left( (\dot{q}(t),\dot{v}(t),\dot{p}(t),\dot{\lambda}(t)), \mathbf{d}E_{\mathfrak{L}}(q(t),v(t),p(t),\lambda(t))\right) \in \overline{D}(q(t),v(t),p(t),\lambda(t)).
\end{eqnarray*}

\item The curve  $(q(t),v(t),p(t),\lambda(t))$ is a solution curve of the vakonomic Lagrange-Dirac dynamical system $( \widehat{D},\mathfrak{L})$ which satisfies

\begin{equation*}
((\dot{q}(t),\dot{p}(t),\dot{\lambda}(t)),\mathbf{d}_{D}\mathfrak{L}(q(t),v(t),\lambda(t)))\in \widehat{D}(q(t),p(t),\lambda(t)),
\end{equation*}
which is equivalent to
\begin{eqnarray*}
\mathbf{i}_{(\dot{q}(t),\dot{p}(t),\dot{\lambda}(t))}\widehat\Omega(q(t),p(t),\lambda(t)) =\de_{D}\mathfrak{L}(q(t),v(t),\lambda(t))).
\end{eqnarray*}
\end{enumerate}
\end{theorem}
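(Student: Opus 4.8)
The plan is to prove the four-way equivalence by closing the cycle $(1)\Rightarrow(2)\Rightarrow(3)\Rightarrow(4)\Rightarrow(1)$; each arrow is, in essence, one of the propositions already established in this section, so the real content is to check that the hypotheses match and that the objects living on the two bundles $(TQ\oplus T^{\ast}Q)\times V^{\ast}$ and $T^{\ast}Q\times V^{\ast}$ are correctly identified along the way.

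For $(1)\Leftrightarrow(2)$ I would first observe that the Hamilton--Pontryagin action integral in statement (1) is literally the functional \eqref{action1}, whose intrinsic rewriting is \eqref{Intrinsic} via $\overline{\Theta}=\overline{\mathrm{pr}}_{T^{\ast}Q}^{\ast}\Theta_{T^{\ast}Q}$ and the vakonomic energy $E_{\ext}$. The proposition on the intrinsic implicit vakonomic Euler--Lagrange equations then shows that, under the endpoint conditions on $q(t)=\overline{\mathrm{pr}}_{Q}(x(t))$, stationarity is equivalent to $\mathbf{i}_{\dot x}\overline{\Omega}=\mathbf{d}E_{\ext}$, that is, statement (2), with local form \eqref{vakoELeq}. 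The one point deserving care is the boundary term $\langle\overline{\Theta}(x(t)),\delta x(t)\rangle\big|_{t_1}^{t_2}$: it vanishes because $\overline{\Theta}=p\,dq$ involves only $\delta q$, which is pinned at the endpoints, while $\delta v$, $\delta p$, $\delta\lambda$ are free, and it is precisely this freedom that forces all four relations in \eqref{vakoELeq} rather than only some of them.

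For $(2)\Leftrightarrow(3)$, since $\overline{D}=\mathrm{graph}\,\overline{\Omega}^{\flat}$ by definition, the inclusion $(\dot x,\mathbf{d}E_{\ext}(x))\in\overline{D}(x)$ is by definition the identity $\overline{\Omega}^{\flat}(x)\cdot\dot x=\mathbf{d}E_{\ext}(x)$, and $\overline{\Omega}^{\flat}(x)\cdot\dot x=\mathbf{i}_{\dot x}\overline{\Omega}(x)$; this is exactly the proposition on the vakonomic Lagrange--Dirac system $(\overline{D},E_{\ext})$, whose local computation $\mathbf{i}_{\dot x}\overline{\Omega}=(-\dot p,0,\dot q,0)$ matched against $\mathbf{d}E_{\ext}$ reproduces \eqref{vakoELeq}, so (2) and (3) are two spellings of the same equation.

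For $(3)\Leftrightarrow(4)$ and $(4)\Rightarrow(1)$ I would pass through the common local normal form \eqref{vakoELeq}. On the $T^{\ast}Q\times V^{\ast}$ side, $\widehat D=\mathrm{graph}\,\widehat\Omega^{\flat}$, so \eqref{ImpVakLagSys2} unwinds to $\mathbf{i}_{(\dot q,\dot p,\dot\lambda)}\widehat\Omega=\Dirac\mathfrak{L}$, which, using the local form \eqref{DiracLocal} of the vakonomic Dirac differential together with the base-point equality $(q,p)=(q,\partial\mathfrak{L}/\partial v)$, again gives \eqref{vakoELeq}; feeding \eqref{vakoELeq} back into the first step recovers the Hamilton--Pontryagin principle (1), closing the cycle. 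The genuinely delicate bookkeeping is that statements (3) and (4) concern curves in \emph{different} bundles: a solution $x(t)=(q(t),v(t),p(t),\lambda(t))$ of (3) on $(TQ\oplus T^{\ast}Q)\times V^{\ast}$ must be matched with its projection $(q(t),p(t),\lambda(t))$ on $T^{\ast}Q\times V^{\ast}$ in (4), while the suppressed fibre coordinate $v(t)$ is recovered as the penultimate component of $\Dirac\mathfrak{L}$ in \eqref{DiracLocal}. Making precise that $\Dirac\mathfrak{L}=\tilde\gamma_Q\circ\mathbf{d}\mathfrak{L}$ simultaneously encodes the Legendre relation $p=\partial\mathfrak{L}/\partial v$ (through base points) and the force terms $-\partial\mathfrak{L}/\partial q$, $\partial\mathfrak{L}/\partial\lambda$, \emph{consistently} with $\mathbf{d}E_{\ext}$ on the larger bundle, is the step I expect to be the main obstacle; once it is in hand, the rest is a direct term-by-term comparison of \eqref{vakoELeq}, \eqref{local-overlinedirac}, \eqref{DiracLocal} and \eqref{local-widehatdirac}.
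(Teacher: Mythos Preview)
Your proposal is correct and matches the paper's approach: the theorem is stated there as a summary without a standalone proof, the equivalences having already been established piecewise by the preceding propositions (Hamilton--Pontryagin $\Leftrightarrow$ \eqref{vakoELeq}, the $(\overline{D},E_{\ext})$ system $\Leftrightarrow$ \eqref{vakoELeq}, and the $(\widehat D,\mathfrak L)$ system $\Leftrightarrow$ \eqref{vakoELeq}). Your cycle $(1)\Rightarrow(2)\Rightarrow(3)\Rightarrow(4)\Rightarrow(1)$ simply threads those propositions together via the common local normal form \eqref{vakoELeq}, and your remarks on the boundary term, the base-point identification $p=\partial\mathfrak L/\partial v$, and the bundle bookkeeping between $(TQ\oplus T^{\ast}Q)\times V^{\ast}$ and $T^{\ast}Q\times V^{\ast}$ are exactly the points the paper handles in those propositions.
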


\section{Examples}

\paragraph{The Vertical Rolling Disk.} Let us consider the following problem for a disk of
radius $R$ and unit mass $m=1$ which rolls on a horizontal plane. The configuration space for this system can be identified with $Q=\R^2\times S^1\times S^1$. By $(x,y)$ we denote the coordinates of the point of contact of the disk with the plane and $(\theta,\varphi)$ give, respectively, the angle between the disk and the $x$ axis, and the angle of rotation
between a fixed diameter in the disk and the $y$ axis. Therefore, we will use the coordinate notation $q=(x,y,\theta,\varphi) \in Q$.

Given the endpoints of $q(t)$ fixed, we want to find the trajectories of the disk connecting such points that minimize the energy consumption. Assume that the disk rolls on a plane without slipping, which is given by the following nonholonomic constraints: 
\begin{eqnarray*}
\phi^{1}(x,y,\theta,\varphi,\dot{x}, \dot{y}, \dot{\theta}, \dot{\varphi})&=&\dot{x}\,\sin{\theta}-\dot{y}\,\cos{\theta}=0,\\
\phi^{2}(x,y,\theta,\varphi,\dot{x}, \dot{y}, \dot{\theta}, \dot{\varphi})&=&\dot{x}\,\cos{\theta}+\dot{y}\,\sin{\theta}-R\,\dot{\varphi}=0.
\end{eqnarray*}
As in $\S$\ref{octh}, this is considered as an optimal control problem by setting $B=Q$, $N=TQ$ and $\pi:TQ\Flder Q$. Using local coordinates $(q,v)=(x,y,\theta,\varphi,v_x,v_y,v_{\theta},v_{\varphi})$, the cost function is given by the following  Lagrangian  $L:TQ\Flder\R$: 
\begin{equation*}\label{LagEx}
L(x,y,\theta,\varphi,v_x,v_y,v_{\theta},v_{\varphi})=\frac{1}{2}(v_x^2+v_y^2+I_1v_{\theta}^2+I_2v_{\varphi}^2),
\end{equation*}
where $I_1$ and $I_2$ denote the momenta of inertia.

In fact, in this framework we regard the velocities as the {\it control} variables. Solving this optimal control problem is precisely the same as the vakonomic problem associated to the vertical rolling disk for the vakonomic Lagrangian on $TQ\times V^{\ast}$ by incorporating the nonholonomic constraints as
\begin{eqnarray*}
&&\ext(x,y,\theta,\varphi,v_x,v_y,v_{\theta},v_{\varphi},\lambda_{1}, \lambda_2)=L(x,y,\theta,\varphi,v_x,v_y,v_{\theta},v_{\varphi})\nonumber\\
&&\hspace{2cm}+\lambda_1\,\phi^{1}(x,y,\theta,\varphi,v_x,v_y,v_{\theta},v_{\varphi})+\lambda_2\,\phi^{2}(x,y,\theta,\varphi,v_x,v_y,v_{\theta},v_{\varphi}).\nonumber
\end{eqnarray*}
In the above, $(\lambda_1,\lambda_2)\in V^{\ast}$ are Lagrange multipliers, where we set $V=\mathbb{R}^{2}$.
\medskip

%$\DQ\subset TQ$ is determined by the vanishing of the constraints
%
%After an easy calculation we find that
%\[
%\DQ(q)=\mbox{span}\lc R\cos{\theta}\frac{\der}{\der x}+R\sin{\theta}\frac{\der}{\der y}+\frac{\der}{\der\varphi}\,,\,\frac{\der}{\der\theta}\rc.
%\]
For each point  $x=(q,v,p,\lambda)\in (TQ \oplus T^{\ast}Q) \times V^{\ast}$, we employ the local coordinates
\[
x=(x,y,\theta,\varphi, v_{x}, v_{y},v_{\theta},v_{\varphi},p_{x}, p_{y},p_{\theta},p_{\varphi},\lambda_1,\lambda_2).
\]
Then the Hamilton-Pontryagin principle for $\ext$ yields the equations of motion:
\begin{equation}\label{EqsPes}
\begin{array}{lcl}
&& \dot x=v_x,\quad\dot y=v_y,\quad\dot\theta=v_{\theta},\quad\dot\varphi=v_{\varphi},\\[2mm]
& &p_x=v_x+\lambda_1\,\sin{\theta}, \quad \dot p_x=0, \\[2mm]
& &p_y=v_y-\lambda_1\,\cos{\theta},  \quad \dot p_y=0,\\[2mm]
& &p_{\theta}=I_1\,v_{\theta}, \quad  \dot p_{\theta}=\lambda_1\lp v_x\cos{\theta}+v_y\sin{\theta}\rp+\lambda_2\lp -v_x\sin{\theta}+v_y\cos{\theta}\rp,\\[2mm]
& &p_{\varphi}=I_2\,v_{\varphi}-\lambda_2\,R,\quad \dot p_{\varphi}=0,
\end{array}
\end{equation}
together with the nonholonomic constraints:
\[
v_x\sin{\theta}-v_y\cos{\theta}=0,\,\,\,\,v_x\cos{\theta}+v_y\sin{\theta}-Rv_{\varphi}=0.
\]
%These equations induce the vakonomic Euler-Lagrange equations
%\[
%\dot x=v_x,\,\,\,\dot y=v_y,\quad\dot\theta=v_{\theta},\,\,\,\dot\varphi=v_{\varphi}, 
%\]
%\begin{eqnarray*}
%\dot v_x+\dot\lambda_1\,\sin{\theta}&=&-\lambda_1\,\cos{\theta}\,v_{\theta},\\
%\dot v_y-\dot\lambda_1\,\cos{\theta}&=&\lambda_1\,\sin{\theta}\,v_{\theta},\\
%I_1\,\dot v_{\theta}&=&\lambda_1\,(v_x\cos{\theta}+v_y\sin{\theta})+\lambda_2\,(-v_x\sin{\theta}+v_y\cos{\theta}),\\
%I_2\,\dot v_{\varphi}-R\,\dot\lambda_2&=&0,\\
%0&=&v_x\sin{\theta}-v_y\cos{\theta},\\
%0&=&v_x\cos{\theta}+v_y\sin{\theta}-R\,v_{\varphi}.
%\end{eqnarray*}

Next, we shall see how the vakonomic Lagrange-Dirac system can be constructed by using a Dirac structure $\widehat{D}$ on $T^{\ast}Q\times V^{\ast}$. 
Associated with the vakonomic Lagrangian $\mathfrak{L}(q,v,\lambda)$ on  $TQ\times V^{\ast}$, its differential  is a one-form $\de\ext:TQ\times V^{\ast}\Flder T^{\ast}(TQ\times V^{\ast})$, which may be locally described by
\begin{equation*}
\begin{split}
\de\ext%&=\lp q,v,\lambda,\frac{\der\ext}{\der q},\frac{\der\ext}{\der v},\frac{\der\ext}{\der\lambda}\rp\\
&=\left(x,\;y,\; \theta,\; \varphi,\;v_x,\; v_y,\; v_{\theta},\; v_{\varphi},\;  \lambda_1,\; \lambda_2,\right.\\
&\qquad \left.\frac{\der\ext}{\der x},\frac{\der\ext}{\der y},\; \frac{\der\ext}{\der \theta},\; \frac{\der\ext}{\der \varphi},\; 
\frac{\der\ext}{\der v_{x}},\; \frac{\der\ext}{\der v_{y}},\; \frac{\der\ext}{\der v_{\theta}},\; \frac{\der\ext}{\der v_{\varphi}},\;\frac{\der\ext}{\der\lambda_1},\frac{\der\ext}{\der\lambda_2}
\right).
\end{split}
\end{equation*}
Then, the vakonomic Dirac differential of $\ext$ is denoted by
\begin{equation*}
\begin{split}
\de_D\ext&=\left(x,\;y,\; \theta,\; \varphi,\; \frac{\der\ext}{\der v_{x}},\; \frac{\der\ext}{\der v_{y}},\; \frac{\der\ext}{\der v_{\theta}},\; \frac{\der\ext}{\der v_{\varphi}},\; \lambda_1,\; \lambda_2,\;\right.\\
&\qquad \left.-\frac{\der\ext}{\der x},-\frac{\der\ext}{\der y},\; -\frac{\der\ext}{\der \theta},\; -\frac{\der\ext}{\der \varphi},\; v_x,\; v_y,\; v_{\theta},\; v_{\varphi},\;\frac{\der\ext}{\der\lambda_1},\frac{\der\ext}{\der\lambda_2}\right).
\end{split}
\end{equation*}
It follows from the condition of the vakonomic Lagrange-Dirac system, namely
\begin{equation*}
((\dot{q}(t),\dot{p}(t),\dot{\lambda}(t)),\mathbf{d}_{D}\mathfrak{L}(q(t),v(t),\lambda(t)))\in \widehat{D}(q(t),p(t),\lambda(t)),
\end{equation*}
that
\[
\lp\begin{array}{cccccccccc}
0&0&0&0&-1&0&0&0&0&0\\[1mm]
0&0&0&0&0&-1&0&0&0&0\\[1mm]
0&0&0&0&0&0&-1&0&0&0\\[1mm]
0&0&0&0&0&0&0&-1&0&0\\[1mm]
1&0&0&0&0&0&0&0&0&0\\[1mm]
0&1&0&0&0&0&0&0&0&0\\[1mm]
0&0&1&0&0&0&0&0&0&0\\[1mm]
0&0&0&1&0&0&0&0&0&0\\[1mm]
0&0&0&0&0&0&0&0&0&0\\[1mm]
0&0&0&0&0&0&0&0&0&0\\[1mm]
\end{array}
\rp
\lp\begin{array}{c}
\dot x\\[1mm] \dot y\\[1mm] \dot\theta \\[1mm] \dot\varphi\\[1mm] \dot p_{x}\\[1mm] \dot p_{y}\\[1mm] \dot p_{\theta}\\[1mm] \dot p_{\varphi}\\[1mm] \dot\lambda_1\\[1mm] \dot\lambda_2
\end{array}
\rp=
\lp\begin{array}{c}
-\frac{\der\ext}{\der x}\\[1mm]-\frac{\der\ext}{\der y}\\[1mm]-\frac{\der\ext}{\der \theta}\\[1mm]-\frac{\der\ext}{\der \varphi}\\[1mm] v_x\\[1mm] v_y\\[1mm] v_{\theta}\\[1mm] v_{\varphi}\\[1mm] \frac{\der\ext}{\der\lambda_1}\\[2mm] \frac{\der\ext}{\der\lambda_2}
\end{array}
\rp,
\]
where
\[
p_{x}=\frac{\der\ext}{\der v_{x}},\quad p_{y}=\frac{\der\ext}{\der v_{y}},\quad p_{\theta}=\frac{\der\ext}{\der v_{\theta}}, \quad p_{\varphi}=\frac{\der\ext}{\der v_{\varphi}}.
\]
Needless to say, the above matrix equation are equivalent with the implicit vakonomic Euler-Lagrange equations given in \eqref{EqsPes}.

\paragraph{The Vakonomic Particle.}
A particle of unit mass evolving in $Q=\R^3$ subject to the nonholonomic constraint $\phi(x,y,z,\dot{x},\dot{y},\dot{z})=\dot z-y\dot x=0$. Using local coordinates $(q,v)=(x,y,z,v_x,v_y,v_z)$ the  Lagrangian is given by $L=\frac{1}{2}\lp v_x^2+v_y^2+v_z^2\rp$, while the vakonomic Lagrangian is
\[
\mathfrak{L}=\frac{1}{2}\lp v_x^2+ v_y^2+v_z^2\rp+\lambda(v_ z-y\,v_x).
\]
For each point $x=(q,v,p,\lambda)\in (TQ\oplus T^*Q)\times V^*$ we employ the local coordinates
\[
x=(x,y,z,v_x,v_y,v_z,p_x,p_y,p_z,\lambda).
\]
The Hamilton-Pontryagin principle for the vakonomic Lagrangian, we obtain the implicit vakonomic Euler-Lagrange equations are given by
\begin{eqnarray*}
v_{x}&=&\dot{x},\qquad \qquad \,\,\,\, v_{y}=\dot{y},\,\,\,\,\,\, \qquad v_{z}=\dot{z},\\
p_x&=&v_x-\lambda\,y,\,\,\,\,\,\,\,\,\,\,\dot p_x=0,\\
p_y&=&v_y,\,\,\,\,\,\,\,\,\,\,\,\,\,\,\,\,\,\,\,\,\,\,\,\,\,\dot p_y=-\lambda\,v_x,\\
p_z&=&v_z+\lambda,\,\,\,\,\,\,\,\,\,\,\,\,\,\,\,\dot p_z=0,
\end{eqnarray*}
together with the nonholonomic constraints
\[
v_z-y\,v_x=0.
\]
Now we construct the implicit vakonomic Lagrangian system using the Dirac structure $\widehat{D}$. Noting 
\[
\de_D\ext=\left(x,\;y,\; z,\; \frac{\der\ext}{\der v_{x}},\; \frac{\der\ext}{\der v_{y}},\; \frac{\der\ext}{\der v_{z}},\; \lambda,-\frac{\der\ext}{\der x},-\frac{\der\ext}{\der y},\; -\frac{\der\ext}{\der z},\; v_x,\; v_y,\; v_z,\;\frac{\der\ext}{\der\lambda}\right),
\]
it follows from the condition 
\begin{equation*}
((\dot{q}(t),\dot{p}(t),\dot{\lambda}(t)),\mathbf{d}_{D}\mathfrak{L}(q(t),v(t),\lambda(t)))\in \widehat{D}(q(t),p(t)\lambda(t))
\end{equation*}
that the implicit vakonomic Euler-Lagrange equations are obtained as
\[
\lp\begin{array}{ccccccc}
0&0&0&-1&0&0&0\\[1mm]
0&0&0&0&-1&0&0\\[1mm]
0&0&0&0&0&-1&0\\[1mm]
1&0&0&0&0&0&0\\[1mm]
0&1&0&0&0&0&0\\[1mm]
0&0&1&0&0&0&0\\[1mm]
0&0&0&0&0&0&0
\end{array}
\rp
\lp\begin{array}{c}
\dot x\\[1mm] \dot y\\[1mm] \dot z \\[1mm]  \dot p_{x}\\[1mm] \dot p_{y}\\[1mm] \dot p_{z} \\[1mm] \dot\lambda
\end{array}
\rp=
\lp\begin{array}{c}
-\frac{\der\ext}{\der x}\\[1mm]-\frac{\der\ext}{\der y}\\[1mm]-\frac{\der\ext}{\der z}\\[1mm] v_x\\[1mm] v_y\\[1mm] v_{z}\\[1mm] \frac{\der\ext}{\der\lambda}
\end{array}
\rp,
\]
where
\[
p_{x}=\frac{\der\ext}{\der v_{x}},\quad p_{y}=\frac{\der\ext}{\der v_{y}},\quad p_{z}=\frac{\der\ext}{\der v_{z}}.
\]

\paragraph{The Vakonomic Skate.}

Consider a plane $\Xi$ with Cartesian coordinates $(x, y)$ of the contact point of the skate with the plane, and slanted at an angle $\alpha$ (which is fixed). Let $\varphi$ be an angle which denotes the orientation of the skate
measured from the $x$ axis. Thus, we shall consider $Q=\R^2\times S^1$ as the configuration manifold of this system. Suppose that the skate is moving under the gravitational force, 
where we denote by $g$ the acceleration due to gravity. Let $m$ and $J$ be the mass and the moment inertia
of the skate about a vertical axis through its contact point respectively. The nonholonomic constraint is given by
\[
\phi(x,y,\varphi, \dot{x},\dot{y},\dot{\varphi})=\sin{\varphi}\,\dot x-\cos{\varphi}\,\dot y=0,
\]
while the mechanical Lagrangian reads
\[
L=\frac{m}{2}\lp \dot x^2+\dot y^2\rp+\frac{J}{2}\dot\varphi^2+m\,g\,x\,\sin{\alpha}.
\]
Using coordinates $x=(q,v,p,\lambda)\in (TQ\oplus T^*Q)\times V^*$, 
\[
x=(x,y,\varphi, v_{x}, v_{y},v_{\varphi},p_{x}, p_{y},p_{\varphi},\lambda).
\]
the vakonomic Lagrangian reads
\[
\mathfrak{L}=\frac{m}{2}(v_x^2+v_y^2)+\frac{J}{2}v_{\varphi}^2+mg\,x\,\sin{\alpha}+\lambda\lp\sin{\varphi}\,v_x-\cos{\varphi}\,v_y\rp.
\]
The Hamilton-Pontryagin principle for the vakonomic Lagrangian induces the implicit vakonomic Euler-Lagrange equations  given by the following set of differential-algebraic equations:
\[
\dot x=v_x, \quad \dot y=v_y, \quad \dot\varphi=v_{\varphi},
\]

\begin{eqnarray*}
p_x&=&mv_x+\lambda\,\sin{\varphi},\,\,\,\,\,\,\,\,\,\,\dot p_x=mg\,\sin{\alpha},\\
p_y&=&mv_y-\lambda\,\cos{\varphi},\,\,\,\,\,\,\,\,\,\dot p_y=0,\\
p_{\varphi}&=&Jv_{\varphi}\,\,\,\,\,\,\,\,\,\,\,\,\,\,\,\,\,\,\,\,\,\,\,\,\,\,\,\,\,\,\,\,\,\,\,\,\,\,\dot p_{\varphi}=\lambda\,(\cos{\varphi}\,v_x+\sin{\varphi}\,v_y),
\end{eqnarray*}
together with the constraints
\[
\sin{\varphi}\,v_x-\cos{\varphi}\,v_y=0.
\]
Now we construct the implicit vakonomic Lagrangian system using the Dirac structure $\widehat{D}$. In this case, one has
\begin{equation*}
\de_D\ext=\left(x,\;y, \;\varphi,\; \frac{\der\ext}{\der v_{x}},\; \frac{\der\ext}{\der v_{y}},\; \frac{\der\ext}{\der v_{\varphi}},\; \lambda,\;-\frac{\der\ext}{\der x},-\frac{\der\ext}{\der y},\; -\frac{\der\ext}{\der \varphi},\; v_x,\; v_y,\; v_{\varphi},\;\frac{\der\ext}{\der\lambda}\right)
\end{equation*}
and it follows from 
\begin{equation*}
((\dot{q}(t),\dot{p}(t),\dot{\lambda}(t)),\mathbf{d}_{D}\mathfrak{L}(q(t),v(t),\lambda(t)))\in \widehat{D}(q(t),p(t),\lambda(t))
\end{equation*}
that the implicit vakonomic Euler-Lagrange equations are obtained as
\[
\lp\begin{array}{ccccccc}
0&0&0&-1&0&0&0\\[1mm]
0&0&0&0&-1&0&0\\[1mm]
0&0&0&0&0&-1&0\\[1mm]
1&0&0&0&0&0&0\\[1mm]
0&1&0&0&0&0&0\\[1mm]
0&0&1&0&0&0&0\\[1mm]
0&0&0&0&0&0&0
\end{array}
\rp
\lp\begin{array}{c}
\dot x\\[1mm] \dot y\\[1mm] \dot\varphi\\[1mm] \dot p_{x}\\[1mm] \dot p_{y}\\[1mm] \dot p_{\varphi}\\[1mm] \dot\lambda
\end{array}
\rp=
\lp\begin{array}{c}
-\frac{\der\ext}{\der x}\\[1mm]-\frac{\der\ext}{\der y}\\[1mm]-\frac{\der\ext}{\der \varphi}\\[1mm] v_x\\[1mm] v_y\\[1mm] v_{\varphi}\\[1mm] \frac{\der\ext}{\der\lambda}
\end{array}
\rp,
\]
where
\[
p_{x}=\frac{\der\ext}{\der v_{x}},\quad p_{y}=\frac{\der\ext}{\der v_{y}},\quad p_{\varphi}=\frac{\der\ext}{\der v_{\varphi}}.
\]

\section{Conclusions and Future Works}
We have explored vakonomic mechanics in the context of  Dirac structures and its associated Lagrange-Dirac systems. First, we have shown that the Lagrangian submanifold theory cannot represent nonholonomic mechanics, but vakonomic mechanics can be properly described on a Lagrangian submanifold. Second, we have shown the Lagrange-Dirac dynamical formalism, especially, employing the symplectomorphisms among the iterated tangent and cotangent bundles $TT^{\ast}Q$, $T^{\ast}TQ$ and $T^{\ast}T^{\ast}Q$. Then, we have defined  a vakonomic Lagrangian on  $TQ\times V^{\ast}$ by incorporating nonholonomic constraints into a given Lagrangian on $TQ$. Moreover, we have introduced  its associated energy on the vakonomic Pontryagin bundle $(TQ \oplus T^{\ast}Q) \times V^{\ast}$. Employing this energy, we have shown  that the Hamilton-Pontryagin principle provides the implicit Euler-Lagrange equations for the vakonomic Lagrangian. We have also shown that one can develop a Dirac structure on $(TQ \oplus T^{\ast}Q) \times V^{\ast}$ and its associated vakonomic Lagrange-Dirac system, which yields the implicit vakonomic Euler-Lagrange equations. Furthermore, we have established another Dirac structure on  $T^{\ast}Q \times V^{\ast}$ by extending the formula given in \cite{YoMa2006a}. To do this, we have introduced the bundle maps $\widehat{\Omega}^{\flat}: T(T^{\ast}Q \times V^{\ast}) \to T^{\ast}(T^{\ast}Q \times V^{\ast})$ and $\tilde{\gamma}_Q: T^{\ast}(T^{\ast}Q \times V^{\ast}) \to T^{\ast}(TQ \times V^{\ast})$ in order to define the Dirac differential for the vakonomic Lagrangian on  $TQ\times V^{\ast}$. Finally, we have illustrated our theory by some examples such as the vakonomic particle, the vakonomic skate and the vertical rolling coin.
\medskip

  We hope that the framework of vakonomic Lagrange-Dirac mechanics proposed in this paper can be explored further.  In particular, the following researches are of our concern for future work:
  
\begin{itemize}
\item {\bf \it Symmetry reduction:}  We are interested in the vakonomic Lagrange-Dirac systems with symmetry (see for instance \cite{MarCordeLe2001}) and is our intention to establish a Dirac reduction theory for this. 
 \item {\bf \it Discrete vakonomic Lagrange-Dirac mechanics:} In parallel with what we have done in the continuous setting of the vakonomic Lagrange-Dirac systems in this paper, its discrete analogue shall be developed.
 \item {\bf \it Applications to optimal control problems:}  Due to the relationship of vakonomic mechanics and optimal control theory, and the applications of the latter to vehicles, space missions design, etc.; it is our aim to explore further vakonomic Lagrange-Dirac systems in this direction.
 \end{itemize}
%
%%%%%%%%%%%%%%%%%%%%%%%%%%%%%%%%%%%%%%%%%%%%%%%%%%%%%%%%%%%%%%%%%%%%%%%%%%%%%%%%%%%
\paragraph{Acknowledgements.}
We are very grateful to David Martin de Diego, Frans Cantrijn, Tom Mestdag and Fran\c{c}os Gay-Balmaz for useful remarks and suggestions. We also greatly appreciate Yoshihiro Shibata for his hearty supporting at Institute of Nonlinear Partial Differential Equations of Waseda University. 
\smallskip

The research of F. J. was supported in its first part by Institute of Nonlinear Partial Differential Equations at
Waseda University and was partially developed during his staying there in 2012 as a visiting Postdoctoral associate. In its second stage, the research of F. J. was supported by the DFG Collaborative Research Center TRR 109, `Discretization in Geometry and Dynamics'. The research of H. Y. is partially supported by JSPS Grant-in-Aid 26400408, JST-CREST,  Waseda University Grant for SR 2012A-602 and IRSES project Geomech-246981.

%%%%%%%%%%%%%%%%%%%%%%%%%%%%%%%%%%%%%%%%%%%%%%%%%%%%%%%%%%%%%%%%%%%%%%%%%%%%%%%%%%%


\begin{thebibliography}{150}
%--A------

\bibitem[Abraham and Marsden(1978)]{AbMa1978}
Abraham, R. and J.~E. Marsden [1978],
{\em Foundations of Mechanics}.
\newblock Benjamin-Cummings Publ. Co,
Updated 1985 version, reprinted
by Persius Publishing, second edition.


%\bibitem[Agrachev, Bonnard, Chyba and Kupka (1997)]{Agra}
%Agrachev, A.~A.,  Bonnard, B., Chyba, M. and I. Kupka,  [1997], 
%Sub-Riemannian sphere in Martinet flat case, {\em ESAIM/COCV (Control, Optimization and Calculus of Variations)}, 377--448.



\bibitem[Arnold(1988)]{Arnold}
Arnold, V.~I.[1988], {\em Dynamical Systems: Vol III}. Springer-Verlag, New York.




%--B------


\bibitem[Benito and Mart\'in de Diego(2005)]{BendeDie2005}
Benito, R. and D. Mart\'in de Diego [2005],
Discrete vakonomic mechanics. 
{\em J. Math. Phys.}, \textbf{46}, 083521.


\bibitem[Bloch(2003)]{Bloch2003}
Bloch, A.~M. [2003], {\em Nonholonomic Mechanics and Control},
volume~24 of Interdisciplinary Applied Mathematics.
\newblock Springer-Verlag, New York. With the collaboration of J.
Baillieul, P. Crouch and J. Marsden, and with scientific input from P.
S. Krishnaprasad, R. M. Murray and D. Zenkov.


\bibitem[Bloch and Crouch (1993)]{BlochCrouch}
Bloch, A.~M. and P.~E. Crouch [1993],
Nonholonomic and vakonomic control systems on Riemannian manifolds, {\em Dynamics
and Control of Mechanical Systems}. Fields Institute Communication {\bf 1}. American Mathematical
Society, Providence, RI, 25--52. 





\bibitem[Brockett (1982)]{Brockett}
Brockett, R.~W. [1982], 
Control theory and singular Riemannian geometry. {\em New Directions in Applied Mathematics}, edited
by P. J. Hilton and G. S. Young. Springer, New York, pp. 11--27.
%--C------

\bibitem[Cantrijn, de Le\'on and Mart\'in de Diego(1999)]{CLM1999}
Cantrijn F., de Le\'on M. and Mart\'in de Diego D. [1999],
On almost-Poisson structures in nonholonomic mechanics. {\em Nonlinearity}, \textbf{12}, pp. 721-737.

\bibitem[Cendra, Holm, Hoyle and Marsden(1998)]{CHHM1998}
Cendra, H., Holm, D.~D., Hoyle, M.~J.~W. and J.~E. Marsden [1998],
The Maxwell-Vlasov equations in Euler-Poincar\'e form. {\em J. Math, Phys}.
   \textbf{39}, pp. 3138--3157.
   
\bibitem[Cendra, Etchechouryb and Ferraro(2011)]{CeEtFe2011}
Cendra,~H., Etchechouryb, ~M., and S.~J.~Ferraro [2011],  The Dirac theory of constraints, the Gotay-Nester theory and Poisson geometry. Preprint, arXiv: 1106.3354v1.




\bibitem[Cort{\'e}s, de Le{\'o}n, Mart\'in de Diego,
and Mart{\'{\i}}nez(2003)]{CLMM2003}
Cort{\'e}s, J., de Le{\'o}n, M.,
Mart\'in de Diego, D. and Mart{\'{\i}}nez, S. [2003], Geometric description of
vakonomic and nonholonomic dynamics. Comparison of solutions. 
 {\em SIAM J. Control Optim.} \textbf{41(5)}, pp. 1389--1412.

\bibitem[Courant(1990a)]{Cour1990a}
Courant, T.~J. [1990a], Dirac manifolds.
{\em Trans. Amer. Math. Soc.}
   \textbf{319}, pp. 631--661.

\bibitem[Courant(1990b)]{Cour1990b}
Courant, T.~J. [1990b], Tangent Dirac structures.
{\em J. Phys. A: Math. Gen.}
   \textbf{23}, pp. 5153--5168.
   
\bibitem[Courant and Weinstein(1998)]{CoWe1988}
Courant, T. and A.~Weinstein [1988], Beyond Poisson structures. In {\em Action hamiltoniennes
de groupes. Troisieme theor\'eme de Lie} (Lyon, 1986), volume {\bf 27} of Travaux en Cours,
pp. 39--49. Hermann, Paris.

%--D------

\bibitem[Dirac(1950)]{Dirac1950}
Dirac, P.~A.~M. [1950], Generalized Hamiltonian dynamics.
{\em Canadian J. Math.} \textbf{2}, pp. 129--148.

\bibitem[Dirac(1964)]{Dirac1964}
Dirac P.~A.~M. [1964], {\em Lectures on Quantum Mechanics}, Belfer Graduate School of Science, Yeshiva University, New York.

\bibitem[Dorfman(1987)]{Dorfman1987}
Dorfman, I. [1987],
{\em Dirac structures of integrable evolution equations}. Physics
Letters A. \textbf{125(5)},  pp. 240-246.

%--E------

%--F------

%--G------

\bibitem[Giaquinta and Hildebrandt(1996)]{GiaHil}
Giaquinta, M. and S.~Hildebrandt [1996], {\em Calculus of Variations I}, volume {\bf 310} in Series of Comprehensive Studies in Mathematics,  Springer-Verlag, Berlin Heidelberg.


\bibitem[Gracia, Martin-Solano, Munoz-Lecanda(2003)]{Cata}
Gracia, X., Martin-Solano, J. and Munoz-Lecanda, M. [2003],
Some geometric aspects of variational calculus in constrained systems. {\em Rep. Math. Phys}. {\bf 51(1)}, pp. 127--148.

%--H------

%--I------

\bibitem[Ibort, de Le\'on, Marrero and Mart\'in de Diego(1998)]{ILMM1998}
Ibort A., de Le\'on M., Marrero J.C. and Mart\'in de Diego D. [1998],
A Dirac bracket for nonholonomic Lagrangian systems. {\em Proc. V Fall Workshop: Geometry and Physics} (Jaca, September 1996). Memorias de la Real Academia de Ciencias: Serie de Ciencias Exactas XXXII, pp. 85--101.
%--J------

\bibitem[Jim\'enez, de Le\'on and Mart\'in de Diego(2012)]{deLeJimdeM2012}
Jim\'enez, F., de Le\'on, M.  and D.~Mart\'in de Diego [2012],
Hamiltonian dynamics and constrained variational
calculus: continuous and discrete settings. 
{\em J. Phys.} A, \textbf{45}, 205204 (29 pages).

\bibitem[Jim\'enez, Kobilarov and Mart\'in de Diego(2013)]{JimKobdeM2012}
Jim\'enez, F., Kobilarov, K.  and D.~Mart\'in de Diego [2013],
Discrete variational optimal control. {\em Journal of Nonlinear Science}, {\bf 23(3)}, pp. 393--426.


\bibitem[Jozwikowski and Respondek(2013)]{Respondek}
Jozwikowski, M. and Respondek, W. [2013],
A comparison of vakonomic and nonholonomic variational problems with applications to systems on Lie groups. Preprint: arXiv:1310.8528v1.



\bibitem[Jurdjevic (1997)]{Jurdjevic}
Jurdjevic, V. [1997],
{\em Geometric Control Theory}, Cambridge Studies in Advanced Mathematics, {\bf 52}. Cambridge University Press.


%--K------



\bibitem[Koiler and Delgado (1998)]{Jair}
Koiler, J. and Delgado, J.[1998],
On efficiency calculations for nonholonomic locomotion problems: An application to
microswimming.  {\em Rep. Math. Phys.}, {\bf 42}, pp. 165--183.




\bibitem[Koon and Marsden(1997)]{KM1997}
Koon, W.~S. and Marsden,J.~E. [1997],
The Hamiltonian and Lagrangian approaches to the dynamics of nonholonomic systems. {\em Rep. Math. Phys.}
\textbf{40}, pp. 21--62.


\bibitem[Koon and Marsden(1998)]{KM1998}
Koon, W.~S. and Marsden, J.~E. [1998],
Poisson reduction of nonholonomic mechanical systems with symmetry. {\em Proc. Workshop on Non-Holonomic Constraints in Dynamics} (Calgary, August 1997). Rep. Math. Phys. \textbf{42}, pp. 101--134.


\bibitem[Kozlov (1983)]{Kozlov}
Kozlov, V.~V. [1983], Realization of nonintegrable constraints in classical mechanics. {\em Dokl. Akad. Nauk.} SSSR \textbf{271}, 550--554 (Russian): English translation: Sov. Phys. Dokl., \textbf{28}, pp. 735--737.
%--L------

\bibitem[Lanczos(1949)]{Lan1949}
Lanczos,~C. [1949], 
{\em The Variational Principles of Mechanics}, University of Toronto Press. 

\bibitem[Langerock (2003)]{Lang}
Langerock, B. [2003]
A connection theoretic approach to sub-Riemannian geometry.
{\em J. Geom. Phys.} {\bf 46}, pp. 203--230.



\bibitem[de Le\'on(2012)]{Histo}
de Le\'on, M. [2012],
A historical review of nonholonomic mechanics.
{\em  RACSAM}, {\bf 106}, pp. 191--224.


\bibitem[de Le\'on, Marrero and Mart\'in de Diego (2000)]{LeMaMa2000}
de Le\'on, M., Marrero, J.~C. and  Mart\'in de Diego, D. [2000],
Vakonomic mechanics versus non-holonomic mechanics: A unified
geometrical approach. {\em J. Geo. Phys.}, {\bf 35}, pp. 126--144.


\bibitem[de Le\'on and Mart\'in de Diego (1998)]{LeMa1998}
de Le\'on, M. and D.~Mart\'in de Diego [1998],
Conservation laws and symmetry in economic growth models: A geometrical
approach. {\em Extracta Mathematicae}, \textbf{13}, pp. 335--348.

\bibitem[de Le\'on, Mart\'in de Diego, and A. Santamar\'ia-Merino(2007)]{deLedeMS-MA2007}
de Le\'on, M., Mart\'in de Diego, D. and A. S-M. [2007],
Discrete variational integrators and optimal control theory.
{\em Advances in Computational Mathematics}, {\bf 26(1-3)}, pp. 251-268.

\bibitem[de Le\'on, Mart\'in de Diego and Vaquero(2012)]{LMV2012}
de Le\'on, M., Mart\'in de Diego D. and Vaquero M. [2012],
A Universal Hamilton-Jacobi theory. Preprint, arXiv:1209.5351v1.


\bibitem[Lewis and Murray (1995)]{Lewis}
Lewis, A.~D. and R.~M. Murray  [1995],
Variational principles for constrained systems: Theory and experiment. {\em Int. J. Nonlinear Mech.} \textbf{30}, pp. 793-815.

\bibitem[Liberman and Marle(1987)]{LibMarle1987}
Libermann, P. and Ch.~M. Marle [1987], {\em Symplectic Geometry and Analytical Mechanics},
Mathematics and its Applications, {\bf 35}. D. Reidel Publishing Co., Dordrecht.

%--M------

\bibitem[Marsden and Ratiu(1999)]{MarRat1999}
Marsden, J.~E. and  T.~S. Ratiu [1999],
{\em Introduction to Mechanics
and Symmetry}. Texts in Applied Mathematics {\bf 17}.
\newblock Springer-Verlag, second edition.

\bibitem[Mart\'inez, Cort\'es and de Le\'on(2000)]{MarCordeLe2000}
Mart\'inez, S., Cort\'es, J. and M. de Le\'on [2000],
The geometrical theory of constraints applied to the dynamics of
vakonomic mechanical systems: The vakonomic bracket. {\em J. Math. Phys.} \textbf{41}, 2090.

\bibitem[Mart\'inez, Cort\'es and de Le\'on(2001)]{MarCordeLe2001}
Mart\'inez, S., Cort\'es, J. and M. de Le\'on [2001],
Symmetries in vakonomic dynamics. Applications to optimal control. {\em J. Geom. Phys.} \textbf{38(3-4)}, 
pp. 343--365.

\bibitem[Montgomery (2002)]{Mont}
Montgomery, R. [2002], 
{\em A Tour of Subriemannian Geometries, Their Geodesics and Applications}, Mathematical Surveys and
Monographs {\bf 91}, AMS, Providence, RI.

%--N------

\bibitem[Neimark and Fufaev(1972)]{NeFu1972}
    Neimark, J.\,I. and N.\,A. Fufaev [1972],
    {\it Dynamics of Nonholonomic Systems},
    Translations of Mathematical Monographs, AMS, {\bf 33}.

%--O------


%--P------
\bibitem[Pontryagin, Boltyanski\u\i, Gamkrelidze and Mishchenko(1962)]{PBGM1962}
    Pontryagin, L.~S, Boltyanski\u\i, V.~G., Gamkrelidze, R.~V., and E. F.~Mishchenko [1962],
    {\it Mathematical Theory of Optimal Processes}, translated by Trirogoff, K.~N.
    Wiley-Interscience, New York.


%--Q------

%--R------


%--S------

\bibitem[Skinner and Rusk(1983)]{SkRu1983}
Skinner, R. and  R. Rusk [1983],
Generalized {H}amiltonian dynamics. {I}: {F}ormulation on {$T\sp{\ast}
Q\oplus TQ$}. {\em J. Math. Phys.} \textbf{24}, pp. 2589--2594. (See also
the same issue, pp. 2581--2588 and pp. 2595--2601).

\bibitem[Sussmann(1998)]{Sussmann1998}
Sussmann, H.~J.[1998],
An introduction to the coordinate-free maximum principle.
{\em Geometry of Feedback and Optimal Control} (B. Jackubczyk and W. Respondek, eds.), 
Monographs Textbooks Pure. Appl. Math. \textbf{207}, pp. 463--557.

%--T------


\bibitem[Tulczyjew(1976a)]{Tu1976a}
Tulczyjew, W.~M.[1976], Les sous-vari\'et\'es lagrangiennes et la dynamique hamiltonienne.
{\em C. R. Acad. Sc. Paris}, {\bf 283} S\'erie A, pp. 15--18.



\bibitem[Tulczyjew(1976b)]{Tu1976b}
Tulczyjew, W.~M.[1976], Les sous-vari\'et\'es lagrangiennes et la dynamique lagrangienne.
{\em C. R. Acad. Sc. Paris}, {\bf 283} S\'erie A, pp. 675--678.



\bibitem[Tulczyjew(1977)]{Tu1977}
Tulczyjew, W.~M. [1977], The Legendre transformation.
{\em Ann. Inst. H. Poincar\'e}, Sect. A, \textbf{27(1)}, pp. 101--114.


\bibitem[Tulczyjew and Urba\'nski(1999)]{TuUr1999}
Tulczyjew, W.~M. and P. Urba\'nski [1999],
A slow and careful Legendre transformation for singular Lagrangians.
{\em Acta Physica Polonica}, B, \textbf{30}, pp. 2909--2978.

%--U------

%--V------

\bibitem[van der~Schaft and Maschke(1995)]{VaMa1995}
van der~Schaft, A.~J. and B.~M. Maschke [1995], 
The {H}amiltonian formulation of energy conserving physical systems
with external ports. {\em Archiv f\"ur Elektronik und
\"Ubertragungstechnik,}
\textbf{49}, pp. 362--371.


\bibitem[Vaisman(1994)]{Vaisman1994}
Vaisman, I. [1994],
{\em Lectures on the Geometry of Poisson Manifolds}. 
Progress in Mathematics, Birkhuser Verlag, Based, \textbf{118}.

%--W------
\bibitem[Weinstein(1971)]{Wein1971}
Weinstein, A. [1971],
Symplectic manifolds and their Lagrangian submanifolds.
{\em Advances in Mathematics}. \textbf{6}, pp. 329--346.




\bibitem[Weinstein(1979)]{Wein1979}
Weinstein, A. [1979], {\em Lectures on Symplectic Manifolds}, CBMS Regional Conference Series
in Mathematics, {\bf 29}. American Mathematical Society, Providence, R.I.


%--X------

%--Y------

\bibitem[{Yoshimura and Marsden(2006a)}]{YoMa2006a}
Yoshimura, H. and J.~E. Marsden [2006a], Dirac structures in Lagrangian
  mechanics Part I: Implicit Lagrangian systems. {J. Geom. and Phys.},
  \textbf{57}, pp. 133--156.

\bibitem[{Yoshimura and Marsden(2006b)}]{YoMa2006b}
Yoshimura, H. and J.~E. Marsden [2006b], Dirac structures in Lagrangian
  mechanics Part II: Variational structures. {\em J. Geom. and Phys.},
  \textbf{57}, pp. 209--250.

\bibitem[{Yoshimura(2008)}]{Yo2008}
Yoshimura, H. [2008], Induced Symplectic Structures and Holonomic Lagrangian Mechanical Systems.
{\em J. System Design and Dynamics}, \textbf{2(3)}, pp. 684--693.

%--Z------

\end{thebibliography}
\end{document}